\newtheorem{theorem}{Theorem}
\newtheorem{proposition}[theorem]{Proposition}
\newtheorem{corollary}[theorem]{Corollary}
\newtheorem{lemma}[theorem]{Lemma}
\newenvironment{proof*}{\vskip 2mm\noindent {}}{\hfill $\Box$ \vskip 2mm}
\newcommand{\C}{{\mathbb C}}
\newcommand{\R}{{\mathbb R}}
\newcommand{\D}{{\mathbb{D}}}
\newcommand{\E}{{\mathcal E}}
\newcommand{\OO}{{\mathcal O}}
\def\Re{\operatorname{Re}}
\def\Im{\operatorname{Im}}
\def\eps{\varepsilon}
\def\wdt{\widetilde}
\def\phi{\varphi}
\def\Om{\Omega}
\def\bt{\beta}
 \def\la{\lambda}
\def\bc{\bold C}
\def\C{\Bbb C}
\def\D{\Bbb D}
\def\vt{\vert}
\def\al{\alpha}
\def\de{\delta}
\def\vt{\vert}
\def\va{\varphi}
\def\va{\varphi}
\title[Invariant metrics near non-semipositive points]
{Estimates for invariant metrics near non-semipositive boundary
points}
\author{Nguyen Quang Dieu, Nikolai Nikolov, Pascal J. Thomas}
\address{Department of Mathematics\\
Hanoi University of Education (Dai Hoc Su Pham Ha Noi)\\ Cau Giay, Tu Liem \\Hanoi, Viet Nam}
\email{dieu\_vn@yahoo.com}
\address{Institute of Mathematics and Informatics\\ Bulgarian Academy
of Sciences\\ Acad. G. Bonchev 8, 1113 Sofia,
Bulgaria}\email{nik@math.bas.bg}
\address{Universit\'e de Toulouse\\ UPS, INSA, UT1, UTM \\
Institut de Math\'e\-matiques de Toulouse\\
F-31062 Toulouse, France} \email{pthomas@math.univ-toulouse.fr}
\subjclass[2000]{32F45}
\keywords{invariant metrics}
\begin{document}
\begin{thanks} {This note was written during the stay as guest professors
of the first and second named authors
at the Paul Sabatier University, Toulouse in June and July, 2010.  The first named author is partially supported by the NAFOSTED program. The collaboration between the second and third named authors is
supported by the bilateral cooperation between CNRS and the Bulgarian Academy of Sciences. }
\end{thanks}

\begin{abstract} We find the precise growth of some invariant metrics near
a point on the boundary of a domain where the Levi form has at least one negative
eigenvalue.
\end{abstract}

\maketitle

\section{Behavior of the Azukawa and Kobayashi-Royden pseudometrics}
\label{behav}

Let $D\subset\C^n$ be a domain. Denote by  $C_D,$ $S_D,$ $A_D$ and
$K_D$ the Carath\'eo\-dory, Sibony, Azukawa and
Kobayashi(--Royden)
 metrics of $D,$ respectively (cf.~\cite{Jar-Pfl1}).
$K_D$ is known to be the largest holomorphically invariant metric.  Recall that the
{\it indicatrix} of a metric $M_D$ at a base point $z$ is
$$
I_z M_D:= \left\lbrace v \in T^{\C}_z D: M_D(z,v)<1 \right\rbrace .
$$
The indicatrices of $C_D$ and $S_D$ are convex domains, and the indicatrices of $A_D$ are pseudoconvex
domains. The larger the indicatrices, the smaller the metric. The Kobayashi--Buseman
metric $\widehat K_D$ is the largest invariant metric with convex indicatrices (they are the convex hulls of
the indicatrices of $K_D$). Since the indicatrices of $K_D$ are balanced domains and the envelope of holomorphy
of a balanced domain in $\C^n$ is a balanced domain in $\C^n$, we may define $\wdt K_D$ to be {\it the largest
invariant metric with pseudoconvex indicatrices,} i.e. $I_z\wdt K_D$ to be the envelope of holomorphy of
$I_z K_D$ for any $z\in D.$ Then
\begin{equation}
\label{order}
C_D\le S_D\le\min\{A_D,\widehat K_D\}\le\max\{A_D,\widehat K_D\}\le\widetilde K_D\le K_D.
\end{equation}

We list some properties of $\wdt K_D$ in Section \ref{propnew}, Propositions
\ref{list} and \ref{cont}.

Let $D\Subset\C^n$, and suppose that $a \in \partial D$ and that the boundary $ \partial D$
is $\mathcal C^2$-smooth in a neighborhood of $a$. We say that $a$ is {\it semipositive} if the restriction of the
Levi form  on the complex tangent hyperplane to $\partial D$ at $a$ has only non-negative eigenvalues.
A {\it non-semipositive point} $a$ is such that the above restriction has a negative
eigenvalue. This is termed a "non-pseudoconvex point" in \cite{For-Lee}.

Denote by $n_a$ and $\nu_a$ the inward normal and a unit complex normal vector to $\partial D$
at $a,$ respectively. Let $z\in n_a$ near $a$ and $d(z)=\mbox{dist}(z,\partial D)\ (=|z-a|).$ Note that for $\mathcal C^2$-smooth boundaries,
$d^2$ is also  $\mathcal C^2$-smooth is a neighborhood of $\partial D$
\cite{Kra-Park}.
Due to
Krantz \cite{Kra} and Fornaess--Lee \cite{For-Lee}, the following estimates hold:
$$K_D(z;\nu_a)\asymp(d(z))^{-3/4},\quad S_D(z;\nu_a)\asymp(d(z))^{-1/2},\quad  C_D(z;\nu_a)\asymp 1.$$
In fact, one may easily see that $C_D(z;X)\asymp |X|$ for any $z$ near $a.$
Denote by $ \langle X,Y \rangle$ the standard hermitian product of vectors in $\C^n$.

Our purpose is to show the following extension of \cite[Theorem 1]{For-Lee}.

\begin{proposition}\label{conc} If $a$ is a non-semipositive boundary point of
a domain $D\Subset\C^n,$
then
$$S_D(z;X)\asymp\widetilde K_D(z;X)\asymp\frac{|\langle \nabla d(z),X\rangle|}{{d(z)}^{1/2}}+|X|\quad\mbox{near }a,$$
and by \eqref{order} that estimate holds for $A_D$ and $\widehat K_D$ as well.
\end{proposition}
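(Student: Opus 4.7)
The plan is to establish separately a lower bound for $S_D$ and an upper bound for $\widetilde K_D$ of the asserted form; the chain \eqref{order} then sandwiches the intermediate metrics $A_D$ and $\widehat K_D$. Choose local holomorphic coordinates near $a$ with $a=0$, $\nu_a=e_n$, and defining function $\rho(z)=-\Re z_n+\sum h_{i\bar j}z_i\bar z_j+O(|z|^3)$ where $h_{1\bar 1}<0$, so that $e_1$ is a ``bad'' complex tangent direction. Decompose $X=\lambda\nu_a+Y$ with $Y$ complex-tangent; then $|\lambda|\asymp|\langle\nabla d(z),X\rangle|$, and Hermitian orthogonality gives $|Y|\le|X|$.

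For the lower bound on $S_D$, the inequality $S_D\ge C_D\asymp|X|$ already recalled in the excerpt handles the $|X|$ term. The Krantz/Fornaess--Lee estimate $S_D(z;\nu_a)\asymp d(z)^{-1/2}$ together with complex homogeneity yield $S_D(z;\lambda\nu_a)\gtrsim|\lambda|/d(z)^{1/2}$. Since $I_zS_D$ is convex (as stated in the excerpt), $S_D(z;\cdot)$ is a seminorm, and the reverse triangle inequality gives
$$S_D(z;X)\ge S_D(z;\lambda\nu_a)-S_D(z;Y)\gtrsim |\lambda|/d(z)^{1/2}-S_D(z;Y).$$
Combined with the a~priori bound $S_D(z;Y)\le\widetilde K_D(z;Y)\lesssim|Y|$ from the upper-bound step below, a two-case dichotomy (whether or not $|\lambda|/d(z)^{1/2}$ dominates $|Y|$, using $|Y|\le|X|$ in the second case and $|X|\lesssim|\lambda|+|Y|$ in the first) produces the claimed lower bound.

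The upper bound on $\widetilde K_D$ is the heart of the proof and exploits both the hypothesis $h_{1\bar 1}<0$ and the fact that $I_z\widetilde K_D$ is the envelope of holomorphy of $I_zK_D$. For complex-tangential $Y$, the negative eigenvalue produces analytic disks of radius bounded below by a constant through $z$ in the direction $e_1$, so $K_D(z;e_1)\lesssim 1$; combined with the standard Levi-polynomial tangent disks of radius $\asymp d(z)^{1/2}$ in the other tangent directions, a Hartogs-type extension on the balanced indicatrix $I_zK_D$ shows that its pseudoconvex envelope $I_z\widetilde K_D$ contains a uniform neighbourhood of the complex-tangent hyperplane, giving $\widetilde K_D(z;Y)\lesssim|Y|$. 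For the normal direction, $I_zK_D$ contains both the Fornaess--Lee disk of radius $\asymp d(z)^{3/4}$ in $\nu_a$ and the constant-sized disk in $e_1$, as well as mixed disks $\zeta\mapsto z+\alpha\zeta e_1+\beta\zeta\nu_a$ with $|\beta|\lesssim d(z)$; the pseudoconvex balanced hull of this data, controlled via the plurisubharmonicity of the logarithm of the Minkowski functional of $I_zK_D$, will contain a disk of radius $\asymp d(z)^{1/2}$ in direction $\nu_a$, giving $\widetilde K_D(z;\nu_a)\lesssim d(z)^{-1/2}$.

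The main obstacle is precisely this envelope-of-holomorphy step in the normal direction, which upgrades Kobayashi's $d(z)^{-3/4}$ rate to the $d(z)^{-1/2}$ rate for $\widetilde K_D$ by passing to the pseudoconvex hull of an indicatrix that itself fails to be pseudoconvex. Carrying this out rigorously, for instance by producing an explicit holomorphic map from a suitable Hartogs-type model domain into $D$ that witnesses the improvement, or equivalently by a quantitative plurisubharmonic interpolation on the Minkowski functional of $I_zK_D$ that marries the Fornaess--Lee normal-direction disks to the tangential disks in direction $e_1$, is the delicate point that distinguishes the present proposition from the bounds already known for $S_D$ and $K_D$ restricted to $\nu_a$.
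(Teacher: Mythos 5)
Your overall architecture matches the paper's: reduce to the negative Levi eigenvalue direction, put a Hartogs figure inside the (non-pseudoconvex) Kobayashi indicatrix, and pass to its envelope of holomorphy to bound $\widetilde K_D$. But the step you yourself flag as ``the delicate point'' is left unproven, and the data you propose to feed into the envelope argument cannot produce the claimed $d(z)^{1/2}$ radius in the normal direction. You offer mixed disks $\zeta\mapsto z+\alpha\zeta e_1+\beta\zeta\nu_a$ only with $|\beta|\lesssim d(z)$, together with the pure-normal Forn\ae ss--Lee disk of radius $d(z)^{3/4}$. The Hartogs figure built from full circles $|\alpha|\asymp 1$ and normal component $|\beta|\le c\,\delta$ has envelope containing only $c\,\delta\,\D\times\D$, i.e.\ it yields $\widetilde K_D(z;\nu_a)\lesssim \delta^{-1}$, which is worse than the known $\delta^{-3/4}$. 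Nor can the $\delta^{3/4}$ normal disk rescue this: the pseudoconvex balanced hull of $(\delta\D\times\D)\cup(\delta^{3/4}\D\times\eps\D)$ is the logarithmically convex hull, and interpolating between the exponents $1$ and $3/4$ only gives exponents in $[3/4,1]$ — you would need to \emph{extrapolate} to reach $1/2$, which log-convexity forbids. So the normal-direction estimate does not follow from the ingredients as stated.

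What actually closes the gap (and is the content of the paper's Proposition \ref{model}) is that the negative eigenvalue permits mixed disks with normal component as large as $|\beta|\le c_1\,\delta^{1/2}$, not just $\delta$: writing the model domain as $\Re z_1<|z_2|^2-q(z')$ and $\phi(\zeta)=(-\delta+\beta\zeta,\zeta,0')$, one has
$$
\Re(\beta\zeta)\le|\beta||\zeta|\le\tfrac12|\beta|^2+\tfrac12|\zeta|^2<\delta+|\zeta|^2
$$
whenever $|\beta|^2\le\delta$. Hence $c_1\delta^{1/2}\,\ov{\D}\times\partial\D\times c\,\ov{\D}^{\,n-2}$ together with $\{0\}\times\ov{\D}\times\{0'\}$ sits in the indicatrix, and the Hartogs phenomenon applied to the pseudoconvex envelope yields the full polydisk $c_1\delta^{1/2}\D\times\D\times c\,\D^{n-2}$, i.e.\ $\widetilde K(z;X)\lesssim|X_1|\delta^{-1/2}+|X|$; the $\delta^{3/4}$ normal disks play no role. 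Your tangential upper bound $\widetilde K_D(z;Y)\lesssim|Y|$ also needs this same Hartogs step (the Levi-polynomial disks alone give only $\delta^{-1/2}|Y'|$ in the remaining tangent directions), so your lower-bound argument for $S_D$ — which is a legitimate alternative to the paper's route via the Forn\ae ss--Lee model estimate and Sibony localization, using instead the seminorm property of $S_D$ and a two-case dichotomy — is also conditioned on repairing this step. With the corrected threshold $|\beta|\lesssim\delta^{1/2}$ the proof goes through; without it, it does not.
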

Note that it does not matter whether the Levi form at $a$ has one or more
negative eigenvalues.

Using the arguments in \cite{For-Lee},
and for the case (i) a reduction to the model case along the lines
of the argument given in the proof of Proposition \ref{kobeps} in
section \ref{proof2}, one may show that

\begin{proposition}
\label{lower}
(i) If $0\le\eps\le 1$ and $a$ is a $\mathcal C^{1,\eps}$-smooth boundary point of
a domain $D\Subset\C^n,$ then
$$S_D(z;X)\gtrsim\frac{|\langle\nu_{a'},X\rangle|}{(d(z))^{1-\frac{1}{1+\eps}}}+|X|\quad\mbox{ near }a,$$
where $a'$ is a point near $a$ such that $z \in n_{a'}$.

(ii) If $0\le\eps\le 1$ and $a$ is a  semipositive $\mathcal C^{2,\eps}$-smooth boundary point of
a domain $D\Subset\C^n,$ then
$$S_D(z;X)\gtrsim\frac{|\langle\nabla d(z),X\rangle|}{(d(z))^{1-\frac{1}{2+\eps}}}+|X|,\quad z\in n_a\mbox{ near }a.$$
\end{proposition}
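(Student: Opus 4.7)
The plan is to follow the arguments of \cite{For-Lee}, with a reduction to a model case for part (i) of the type used in the proof of Proposition \ref{kobeps}. The $|X|$ contribution is routine, following from bounding $S_D$ below by the Euclidean metric on a small interior ball contained in $D$; I concentrate on the normal-direction estimate.

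For part (i), I will normalize coordinates so that $a' = 0$, $\nu_{a'} = e_1$, and $z = d_0 e_1$ with $d_0 = d(z)$. The $\mathcal{C}^{1,\eps}$ regularity gives a local defining function $\rho$ satisfying $|\rho(w) + \Re w_1| \le C|w|^{1+\eps}$ near the origin. The central task is to construct a Sibony-admissible plurisubharmonic function $u$ on $D$ (i.e., $u$ plurisubharmonic with $0 \le u < 1$, $\log u$ plurisubharmonic, $u(z) = 0$, and $u \in \mathcal{C}^2$ near $z$) whose complex Hessian at $z$ in the direction $e_1$ has size at least $d_0^{-2\eps/(1+\eps)}$. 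Heuristically, the exponent $1/(1+\eps)$ is dictated by the fact that $(-\rho)^{1/(1+\eps)}$ has normal derivative of order $d_0^{-\eps/(1+\eps)}$ at $z$, matching the claim; the function $u$ is built by combining such a primitive with an auxiliary bounded plurisubharmonic peak function near $z$ and a holomorphic factor vanishing at $z$, exactly as in the model argument of Proposition \ref{kobeps}.

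Part (ii) follows the strategy of \cite{For-Lee} for semipositive $\mathcal{C}^2$ points, adapted to $\mathcal{C}^{2,\eps}$ regularity. After normalizing $a = 0$ and $\nu_a = e_1$, one writes the $\mathcal{C}^{2,\eps}$ Taylor expansion
$$\rho(w) = -\Re w_1 + \Re Q(w) + H(w,\bar w) + R(w),\quad |R(w)| \le C|w|^{2+\eps},$$
with $Q$ a holomorphic quadratic polynomial (absorbable by a biholomorphic change of variable fixing $e_1$) and $H \ge 0$ the restricted Levi form, non-negative by the semipositive hypothesis. The analogous construction then uses the exponent $1/(2+\eps)$ in place of $1/(1+\eps)$, yielding the Hessian bound $\gtrsim d^{-2(1+\eps)/(2+\eps)}$ in the normal direction.

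The hardest part will be the explicit construction of $u$ under the low regularity hypothesis. Naive outer models of the form $\{\Re z_1 > -C|z'|^{1+\eps}\}$ are typically non-pseudoconvex with trivial invariant metrics (their envelopes of holomorphy fill in by Hartogs extension), so a simple monotonicity comparison is not available; the admissible $u$ must be built directly on $D$, and the preservation of plurisubharmonicity of $\log u$ must be verified with careful accounting of the limited smoothness of $\rho$. The exponents $\eps/(1+\eps)$ and $(1+\eps)/(2+\eps)$ arise as the optimal powers consistent with the Sibony-admissibility conditions.
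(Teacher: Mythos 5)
Your identification of the exponents and of the Sibony admissible class is correct, but the proposal has a genuine gap, and it originates in a misconception. You discard the route the paper intends --- localization of $S_D$ near $a$ followed by the monotonicity $S_{D\cap U}\ge S_{G}$ for an outer model domain $G\supset D\cap U$ of the form $G_\eps=\mathbb B_n(0,\eps)\cap\{\Re z_1<|z_2|^m-q(z')\}$ --- on the grounds that such models are non-pseudoconvex and therefore have ``trivial invariant metrics'' because their envelopes of holomorphy fill in. That reasoning applies to the Carath\'eodory metric (and to $\wdt K_D$), but not to the Sibony metric: plurisubharmonic, and in particular log-plurisubharmonic, functions do not extend to the envelope of holomorphy, so $S$ of a non-pseudoconvex domain is not controlled by its envelope. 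The whole point of \cite{For-Lee}, and of inequality \eqref{sib} quoted in this paper, is that $S_{G_\eps}(P_\delta;X)\gtrsim|X_1|\delta^{1/m-1}+|X|$ blows up on exactly these non-pseudoconvex models (Corollary \ref{cor} even records $S_{G_\eps}\asymp\wdt K_{G_\eps}$ there, and states that this shows the estimates of the present proposition are sharp). So the ``simple monotonicity comparison'' you declare unavailable is in fact the intended proof: for (i), normalize at the nearest boundary point $a'$ exactly as in the proof of Proposition \ref{kobeps} to get $D\cap U\subset\{\Re\zeta_1<O(|\zeta'|^{1+\eps}+|\Im\zeta_1|^{1+\eps})\}$, a model with $m=k=1+\eps$, then apply the localization principle for $S$ and \eqref{sib}; for (ii), use the $\mathcal C^{2,\eps}$ expansion you wrote down, absorb $Q$ by a polynomial change of variables, use $H\ge0$ on the complex tangent hyperplane (handling the normal--tangential cross terms in the standard way) to obtain an outer model with $m=k=2+\eps$, and apply \eqref{sib} again.

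Having rejected that route, you are left with building the admissible function $u$ ``directly on $D$,'' and this step --- which you yourself identify as the hardest part --- is never carried out. No formula for $u$ is proposed, the log-plurisubharmonicity is not verified, and ``combining a primitive of $(-\rho)^{1/(1+\eps)}$ with an auxiliary peak function and a holomorphic factor'' is a description of a hoped-for object, not an argument; note in particular that $(-\rho)^{1/(1+\eps)}$ has no reason to be plurisubharmonic on $D$ under the stated hypotheses. As written, the proposal restates the target estimate with the correct exponents but does not prove it; the fix is to drop the objection to the model domains and route the argument through \eqref{sib} as above.
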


Thus for $\mathcal C^{2,\eps}$-smooth boundaries,
Propositions \ref{conc} and \ref{lower} (ii) characterize the semipositive points in terms of
the (non-tangential) boundary behavior of any metric between $S_D$ and $\wdt K_D.$
In particular, if $D$ is pseudoconvex and $\mathcal C^{2,\eps}$-smooth, then there can be no
$\alpha < 1-\frac{1}{2+\eps}$ and $a \in \partial D$ such that $S_D(z;X) \lesssim d(z)^{-\alpha}|X|$
for $z\in n_a$ near $a$.
A similar characterization
in terms of $K_D$ can be found in \cite{Fu}.
\smallskip

\noindent{\bf Remark.} For the Kobayashi metric
$K_D$ itself, one cannot expect simple estimates similar to that in Proposition \ref{conc}. In \cite[Propositions 2.3, 2.4]{Fu}, estimates are given for $X$ lying in a cone around the normal
direction, i.e. $|\langle\nabla d(z),X\rangle| \gtrsim |X|$.
One may modify the proofs of those propositions to obtain
that for a non-semipositive boundary point $a$ of
a domain $D\Subset\C^2$ there exists $c_1>0$ such that if
$$
|\langle\nabla d(z),X\rangle| > c_1 d(z)^{3/8}|X| ,
$$
then
$$
K_D(z;X)\asymp\frac{|\langle\nabla d(z),X\rangle|}{(d(z))^{3/4}}\quad\mbox{near }a.
$$

At least when $n=2$,
the range of those
estimates can be expanded. Part (3) should hold for any $n \ge 2$, with a similar proof.

\begin{proposition}
\label{dieu}
\ {}
Let $D\Subset\C^2$ be a domain with $\mathcal C^2$-smooth boundary.
\begin{enumerate}
\item
If $a$ is a non-semipositive boundary point of
a domain $D\Subset\C^2,$ then
$$
K_D(z;X)\lesssim\frac{|\langle\nabla d(z),X\rangle|}{(d(z))^{3/4}}+|X|\quad\mbox{near }a.
$$
\item
There exists $c_0>0$ such that if
$|\langle\nabla d(z),X\rangle|<c_0 d(z)^{1/2}|X|,$
then
$$
K_D(z;X)\asymp |X|,
$$
while if $|\langle\nabla d(z),X\rangle|>c_0 d(z)^{1/2}|X|,$
then
$$
\liminf_{d(z)\to0} d(z)^{1/6} \frac{K_D(z;X)}{|X|} >0.
$$
\item
There exists $c_1>c_0$ such that if
$|\langle\nabla d(z),X\rangle|>c_1 d(z)^{1/2}|X|,$
then
$$
K_D(z;X) \asymp \frac{|\langle\nabla d(z),X\rangle|}{(d(z))^{3/4}}.
$$
\end{enumerate}
\end{proposition}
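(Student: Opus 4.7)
The plan is to work in local coordinates near $a=0$ in which $D$ has local defining function
$$\rho(z_1, z_2) = 2\Re z_2 - 2\mu|z_1|^2 + o(|z|^2),$$
with $\mu > 0$ coming from the negative Levi eigenvalue. I take $z = (0, -d)$ on the inward normal and write $X = (X_1, X_2)$; then $|\langle \nabla d(z), X\rangle| \asymp |X_2|$ and $|X| \asymp |X_1| + |X_2|$. For part~(1), I would combine two explicit analytic discs through $z$. A tangential linear disc $\psi_1(\zeta) = (\eta \zeta, -d)$ with a small fixed $\eta > 0$ sits in $D$ because the leading terms of $\rho(\psi_1)$ are $-2d - 2\mu\eta^2|\zeta|^2$, both negative, which yields $K_D(z; (X_1, 0)) \lesssim |X_1|$. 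A Krantz-type quadratic disc $\psi_2(\zeta) = (A\zeta^2, -d + B\zeta)$ with $|A|$ of unit order lies in $D$ when $\max_{\zeta \in \D}(\Re B\zeta - \mu|A|^2|\zeta|^4) \le d$; optimizing in $|\zeta|$ gives $|B|^{4/3} \lesssim d$, whence $K_D(z; (0, X_2)) \lesssim |X_2|/d^{3/4}$. Combining both into a single disc tailored to $X$ produces the estimate of part~(1).

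For part~(3), I would use a Borel--Carath\'eodory argument for the lower bound. Let $f\colon \D \to D$ be any analytic disc with $f(0) = z$ and $f'(0) = (A, B) \parallel X$. After a standard localization to a sub-disc on which $f$ stays in a small neighborhood of $a$, the inequality $\rho(f(\zeta)) \le 0$ yields $\Re f_2(\zeta) \le \mu|f_1(\zeta)|^2 + \text{l.o.t.}$ Schwarz (using $f_1(0) = 0$ and $|f_1| \le M$ from boundedness of $D$) gives $|f_1(\zeta)| \le M|\zeta|$, and a second Schwarz applied to $(f_1 - A\zeta)/\zeta^2$ refines this to $|f_1(\zeta)|^2 \lesssim |A|^2|\zeta|^2 + |\zeta|^4$. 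Applying Borel--Carath\'eodory to $g(\zeta) := f_2(\zeta) + d$ on $\D_r$ and optimizing over $r \in (0,1)$ yields the key estimate
$$|B| \lesssim |A|\, d^{1/2} + d^{3/4}.$$
Writing $A = X_1/\alpha$, $B = X_2/\alpha$, under the hypothesis $|X_2| > c_1 d^{1/2}|X|$ with $c_1$ chosen larger than the implicit constant, the first term is absorbed and $|\alpha| \gtrsim |X_2|/d^{3/4}$; taking infimum gives $K_D(z; X) \gtrsim |X_2|/d^{3/4}$, matching the upper bound from~(1).

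For part~(2), the upper bound $K_D \lesssim |X|$ when $|X_2| < c_0 d^{1/2}|X|$ comes from the linear disc $\psi(\zeta) = (X_1 t\zeta, -d + X_2 t\zeta)$, which sits in $D$ precisely when $|X_2|^2 \lesssim d\, |X_1|^2$ (satisfied for sufficiently small $c_0$); $t$ may then be chosen of order $1/|X|$, and the matching lower bound is just $K_D \ge C_D \gtrsim |X|$. The second-half lower bound $\liminf_{d\to 0} d^{1/6} K_D(z;X)/|X| > 0$ under $|X_2| > c_0 d^{1/2}|X|$ also follows from the Borel--Carath\'eodory estimate above: in favorable sub-cases one obtains $|\alpha| \gtrsim |X|/d^{1/4}$, already stronger than $|X|/d^{1/6}$.

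The hard part will be reconciling the constants $c_0, c_1$ uniformly between~(2) and~(3). The disc construction in~(2)'s upper bound requires $c_0$ small so that the disc fits inside $D$, while the Borel--Carath\'eodory lower bound requires the threshold to exceed an implicit constant $C_1$ coming from the refined Schwarz estimate on $f_1$. The weaker exponent $d^{-1/6}$ in part~(2)---compared to the $d^{-1/4}$ one would obtain by naive Borel--Carath\'eodory optimization---reflects the accommodation needed in the intermediate transition directions where $X$ has a moderate normal component and neither of the pure estimates is sharp. Handling this range uniformly probably requires the finer modification of Fu's arguments in \cite{Fu} alluded to in the statement.
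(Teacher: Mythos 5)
Your overall strategy for parts (1) and (3) matches the paper's: the paper reduces to the model domain $G=\{\Re z<|w|^2\}\cap\D^2$ (Lemma \ref{lemkob}) and proves the upper bounds with exactly the explicit discs you propose (a quadratically corrected disc for the tangential regime, a Krantz-type disc with quadratic term in the tangential slot for the normal regime), and proves the part-(3) lower bound with the same Borel--Carath\'eodory mechanism (Lemma \ref{realf} inside Lemma \ref{modeps}); your uniform estimate $|B|\lesssim |A|\,d^{1/2}+d^{3/4}$ is in fact a mild streamlining, since the paper splits the cone $|\langle\nabla d(z),X\rangle|\gtrsim|X|$ off to Fu's results and treats only the complementary sector in Lemma \ref{modeps}. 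Two caveats on what you wrote: (a) ``combining both into a single disc'' in part (1) cannot be replaced by adding the two pure estimates, because $K_D(z;\cdot)$ is not subadditive in the vector argument --- you must exhibit the mixed disc and check that the two regimes of validity overlap, which is what Lemma \ref{lemkob}(1)--(2) does with thresholds $2\sqrt2\,\delta^{1/2}$ and $2\,\delta^{1/2}$; (b) your ``l.o.t.'' hides the term $O(|\Im f_2|^2)$ in the defining function, which must be absorbed (the paper builds $|\Im z|^\xi$ into the model $\Omega_\xi$ for precisely this reason), though this is routine.

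The genuine gap is the second half of part (2), which you explicitly defer. Your Borel--Carath\'eodory inequality $|B|\lesssim |A|\,d^{1/2}+d^{3/4}$ gives nothing in the transition range $c_0\,d^{1/2}|X|<|\langle\nabla d(z),X\rangle|\le c_1\,d^{1/2}|X|$: with $|B|\asymp c\,d^{1/2}|A|/|\alpha|$ and $c$ comparable to the implicit constant, the inequality degenerates to $0\le C|\alpha|d^{3/4}$, and no modification of Fu's cone estimates reaches down to the threshold $c_0$ at which the upper bound $K_D\asymp|X|$ fails. The paper closes this with a separate argument (second half of the proof of Lemma \ref{lemkob}(1)): assume for contradiction that $K_G((-\delta,0);(c\delta^{1/2},1))\le\gamma\delta^{-1/6}$, so there is a competitor disc $\varphi=(f,g)$ defined on a disc of radius $r>\gamma^{-1}\delta^{1/6}$; Cauchy estimates give $|a_k|,|b_k|\le r^{-k}$ for the Taylor coefficients, hence the tail of $f$ beyond degree $2$ and the perturbation of $|g|^2$ are both $O(\delta^{-1/2}|\zeta|^3)$ up to factors of $\gamma$. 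The crucial point your approach misses is the uncontrolled quadratic coefficient $a_2$ of the normal component, which could cancel the linear term; the paper neutralizes it by evaluating at $\zeta=\delta^{1/2}e^{i\theta}$ with $\theta\in[-\pi/4,\pi/4]$ chosen so that $\Re(a_2e^{2i\theta})\ge0$, at the cost of losing only a factor $1/\sqrt2$ on the linear term. This yields $-\delta+\frac{c}{\sqrt2}\delta\le\delta+10\gamma^2\delta$, i.e.\ $\gamma\ge\bigl(\tfrac1{10}(\tfrac{c_0}{\sqrt2}-2)\bigr)^{1/2}>0$ whenever $c_0>2\sqrt2$, which is exactly the threshold below which the quadratically corrected disc gives $K_G=|\beta|$. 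Without this phase-selection argument (or an equivalent device controlling $a_2$), the stated sharp dichotomy in part (2) --- same constant $c_0$ on both sides --- is not established.
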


The fact that $c_1$ cannot be made arbitrarily
small already follows from \cite[p.~6, Remark]{Fu}.
Notice that this is one more (unsurprising) instance of discontinuity of the
Kobayashi pseudometric: when $z_\delta =a + \delta \nu_a$,
$X_\delta =c \delta^{1/2} \nu_a + u_a$, where $|u_a|=1,$ $\langle\nu_a,u_a\rangle=0$, then there is a critical value of $c$
below which $K_D(z_\delta;X_\delta)$ remains bounded and above
which it blows up ; and if $c$ is large enough,
$K_D(z_\delta;X_\delta)$  behaves as $\delta^{-1/4}$.

When $\partial D$ is not $\mathcal C^2$-smooth,
we can also give estimates on the growth of the Kobayashi
pseudometric for vectors relatively close to the complex tangent direction
to the boundary of the domain, in the spirit of Proposition \ref{lower} (i), with strictly stronger exponents. Those are the same exponents found by
Krantz \cite{Kra} for the Kobayashi pseudometric applied to the normal
vector.
This result, however, is about vectors which have to make
some positive angle with the normal vector, but may not quite be
orthogonal to it,
 and applies
(for $\eps<1$) to domains which are slightly larger than
those considered by Krantz.

\begin{proposition}
\label{kobeps}
Let $0<\eps\le 1$, and a domain $D\Subset\C^2$ with
$\mathcal C^{1,\eps}$-smooth boundary.
Let $a \in \partial D$ and $z \in D$, close enough to $a$ such that
$a' \in \partial D$ is
a point near $a$ such that $z \in n_{a'}$ ($a'$ is not unique in general).
Then if
$|\langle\nu_{a'},X\rangle|>c_2 d(z)^{\eps/(1+\eps)} |X|$
and $|\langle\nu_{a'},X\rangle|< (1- c_3) |X|$
for some $c_2, c_3 >0$, then
$$
K_D(z;X)\gtrsim
\frac{|\langle\nu_{a'},X\rangle|}{(d(z))^{1-\frac{1}{2(1+\eps)}}}
\quad\mbox{ near }a.
$$

\end{proposition}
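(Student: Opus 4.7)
The plan is to reduce the problem, via a rigid motion and an anisotropic dilation, to a cusp-shaped model domain, and then to bound the Kobayashi metric of the model by combining the Schwarz lemma applied to the tangential coordinate with a Schwarz--Pick half-plane estimate for the normal coordinate, in a self-referential balancing argument that exploits the two-dimensional geometry.

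After a rigid motion I may assume $a'=0$, the inward normal is the positive imaginary $z_2$-axis, and locally near the origin
$$
D=\{(z_1,z_2):\Im z_2>\psi(z_1,\Re z_2)\},
$$
where $\psi\in\mathcal{C}^{1,\eps}$ satisfies $\psi(0)=0$ and $d\psi(0)=0$, so that $|\psi(w)|\le C_0|w|^{1+\eps}$ near the origin. The base point is $z=(0,i\delta)$ with $\delta=d(z)$, and the hypotheses on $X=(X_1,X_2)$ translate into $|X_2|>c_2\delta^{\eps/(1+\eps)}|X|$ and $|X_1|\gtrsim|X|$. I then apply the anisotropic dilation $T_\delta(w_1,w_2):=(w_1/\delta^{1/(1+\eps)},\,w_2/\delta)$, which sends $z$ to $(0,i)$, transforms the local defining function into $\tilde\psi$ with $|\tilde\psi(w_1,x)|\le|w_1|^{1+\eps}+O(\delta^\eps)$, and sends $X$ to $Y=(X_1/\delta^{1/(1+\eps)},\,X_2/\delta)$. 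By biholomorphic invariance of the Kobayashi metric it remains to bound $K_{T_\delta(D)}((0,i);Y)$ from below.

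For an arbitrary holomorphic disc $\Phi:\D\to T_\delta(D)$ with $\Phi(0)=(0,i)$ and $\Phi'(0)=\alpha Y$, write $\Phi=(\Phi_1,\Phi_2)$. Since $T_\delta(D)$ is globally bounded in the $w_1$-direction by $M_1\lesssim\delta^{-1/(1+\eps)}$, Schwarz's lemma yields $|\Phi_1(\zeta)|\le M_1|\zeta|$. For $\zeta$ in a sub-disc $\D_\rho$ where $\Phi(\zeta)$ remains in the local neighborhood of the origin, the defining function inequality gives
$$
\Im\Phi_2(\zeta)>-C_0|\Phi_1(\zeta)|^{1+\eps}-C_0\delta^\eps|\Re\Phi_2(\zeta)|^{1+\eps},
$$
which places $\Phi_2-i$ into a half-plane; the Schwarz--Pick lemma for that half-plane then produces a bound of the form
$$
\alpha\frac{|X_2|}{\delta}\;\lesssim\;M_1^{1+\eps}\rho^\eps+\frac{1}{\rho}.
$$
Optimizing $\rho$ against the forced relation $M_1\ge|\Phi_1'(0)|=\alpha|Y_1|=\alpha|X_1|\delta^{-1/(1+\eps)}$ (so that $\alpha$ appears on both sides and one factor can be absorbed) should give, after some algebra, $\alpha\lesssim\delta^{1-1/(2(1+\eps))}/|X_2|$; translating back by $T_\delta^{-1}$ then yields the claimed lower bound on $K_D(z;X)$.

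The main obstacle is extracting the sharp exponent $1-1/(2(1+\eps))$. A naive execution of the above, in which $M_1$ is taken at its extreme allowable value, produces only the weaker Sibony-type exponent $\eps/(1+\eps)$ of Proposition \ref{lower}(i); the extra factor $\delta^{-1/(2(1+\eps))}$ that distinguishes $K_D$ from $S_D$ must be squeezed out of the self-referential Schwarz bound $M_1\ge\alpha|Y_1|$, which is a disc-specific feature unavailable in the plurisubharmonic framework underlying $S_D$. The joint hypotheses $|\langle\nu_{a'},X\rangle|>c_2d(z)^{\eps/(1+\eps)}|X|$ and $|\langle\nu_{a'},X\rangle|<(1-c_3)|X|$ are precisely what ensure that the tangential and normal components of $X$ are simultaneously at the right order to make this balancing non-degenerate, and to guarantee that the extremal disc has its tangential part at the critical scale.
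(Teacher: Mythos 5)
Your overall strategy --- reduce to a dilated model domain and estimate an arbitrary competitor disc by combining a Schwarz bound on the tangential component with a half-plane (Borel--Carath\'eodory / Schwarz--Pick) bound on the normal component --- is the same as the paper's, which reduces to the model $\Om_\xi=\{\Re z<|w|^\xi+|\Im z|^\xi\}\cap\D^2$ with $\xi=1+\eps$ and proves a growth lemma for $\Re f$. But the step that actually produces the exponent $1-\frac1{2(1+\eps)}$ is absent, and the mechanism you propose for it points the wrong way. Your inequality $\alpha|X_2|/\delta\lesssim M_1^{1+\eps}\rho^\eps+1/\rho$ needs an \emph{upper} bound on $M_1=\sup|\Phi_1|$ to be exploitable, whereas the Schwarz relation $M_1\ge|\Phi_1'(0)|=\alpha|Y_1|$ is a \emph{lower} bound on $M_1$; nothing in your argument prevents a competitor disc from having $M_1$ as large as the global bound $\delta^{-1/(1+\eps)}$, and with that value the optimization in $\rho$ gives, as you yourself compute, only the Sibony exponent $\eps/(1+\eps)$ of Proposition \ref{lower}(i). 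The phrase ``should give, after some algebra'' therefore conceals exactly the content of the proposition.

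The missing idea is to expand the tangential component to second order instead of using the crude bound $|\Phi_1(\zeta)|\le M_1|\zeta|$. In the paper's Lemma \ref{modeps} one writes the disc as $(f,g)$ with $f'(0)=\lambda\alpha$ and $g(t)=\lambda\beta t+\tilde g(t)$, where $|\tilde g(t)|\le 2|t|^2$ by Schwarz plus Cauchy estimates; the defining inequality of the model then yields, on $|t|=r$,
$$
|\lambda\alpha|\,r\ \lesssim\ |\lambda|^\xi r^\xi+r^{2\xi}+\delta .
$$
It is the quadratic-remainder term $r^{2\xi}$ that generates the exponent $\frac{2\xi-1}{2\xi}=1-\frac1{2(1+\eps)}$: the term $|\lambda|^\xi r^\xi$ is absorbed using the hypothesis $|\langle\nu_{a'},X\rangle|>c_2 d(z)^{\eps/(1+\eps)}|X|$, the term $|\Im f|^\xi$ is controlled using $|\langle\nu_{a'},X\rangle|<(1-c_3)|X|$, and a one-variable calculus argument (locating the unique critical point $r_0$ of the resulting function and showing $r_0\lesssim\delta/|\lambda\alpha|$) gives $|\lambda\alpha|\lesssim\delta^{(2\xi-1)/(2\xi)}$, hence the lower bound on $K_D$. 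None of this appears in your sketch, so the proposal has a genuine gap at its decisive step.
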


\section{Proof of Proposition \ref{conc}}
\label{proof1}

The main point in the proof of Proposition \ref{conc} is an upper estimate for
$\widetilde K_G$ on the model domain
$$G_\varepsilon =\mathbb B_n(0,\eps)\cap\{z=(z_1,z_2,z')\in\C^n:0>r(z)=\Re z_1-|z_2|^m+q(z')\},$$
where $\eps>0,$ $m\ge 1$ and $q(z')\lesssim|z'|^k,$ $0<k\le m.$

\begin{proposition}
\label{model}
If $\delta>0$ and $P_\delta=(-\delta,0,0'),$ then
$$
\wdt K_{G_\varepsilon}(P_\delta;X)\lesssim|X_1|\delta^{\frac{1}{m}-1}+|X_2|+|X'|\delta^{\frac{1}{m}-\frac{1}{k}}.
$$
\end{proposition}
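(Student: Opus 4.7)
The strategy will be to identify a Reinhardt wedge-shaped subset $T_\de$ of the Kobayashi indicatrix $I_{P_\de} K_{G_\eps}$ whose envelope of holomorphy in the class of balanced domains of $\C^n$ contains the polydisc $\mathcal P_\de$ of radii $\de^{1-1/m}$, $1$, $\de^{1/k-1/m}$. Since by definition $I_{P_\de}\wdt K_{G_\eps}$ is the envelope of holomorphy of $I_{P_\de}K_{G_\eps}$, monotonicity of the envelope under inclusion would then give $\mathcal P_\de \subset I_{P_\de}\wdt K_{G_\eps}$, hence $\wdt K_{G_\eps}(P_\de;X) \le \mu_{\mathcal P_\de}(X)$, which is precisely the claimed bound.

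First, I would analyze the family of analytic discs $\phi_{(a,b,c)}(\zeta) = (-\de + a\zeta, b\zeta, c\zeta)$ from $\D$ to $G_\eps$. The inclusion $\phi(\D) \subset G_\eps$ reduces, after rotating so that $a \ge 0$ and considering $\zeta = t \in (0,1)$, to
$$
\sup_{t \in (0,1)}\bigl(|a|t - |b|^m t^m + Ct^k|c|^k\bigr) < \de.
$$
A one-variable optimization at the critical point of this polynomial yields $|a| \lesssim |b|\de^{1-1/m}$ and $|c| \lesssim |b|\de^{1/k-1/m}$, valid for $|b|$ bounded above by a constant $b_0$ depending on $\eps$ (to ensure $\phi(\D) \subset \mathbb B_n(0, \eps)$). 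Therefore
$$
I_{P_\de}K_{G_\eps} \supset T_\de := \{(a,b,c) : |a| \le C|b|\de^{1-1/m},\ |c| \le C|b|\de^{1/k-1/m},\ |b| \le b_0\}.
$$
Next, invoking the Thullen-type characterization of pseudoconvex Reinhardt domains (complete $+$ log-convex), I would compute the envelope of holomorphy of $T_\de$: in logarithmic coordinates $(\log|a|, \log|b|, \log|c|)$, the set $T_\de$ is already the intersection of three half-spaces (hence convex), and its complete Reinhardt (downward) closure exponentiates to precisely the polydisc $\mathcal P_\de = \{|a| < C_1 \de^{1-1/m},\ |b| < b_0,\ |c| < C_2 \de^{1/k-1/m}\}$. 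Combining this with monotonicity of the envelope of holomorphy and the inclusion $T_\de \subset I_{P_\de}K_{G_\eps}$ yields the stated bound.

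The principal obstacle will be carrying out the Thullen-type envelope computation carefully. A more naive approach via a single holomorphic map $\Phi: \mathcal P_\de \to G_\eps$ with $\Phi(0) = P_\de$ and $d\Phi_0 = \te{diag}(\de^{1-1/m}, 1, \de^{1/k-1/m})$ turns out to be infeasible, since the boundary constraint $\Re\Phi_1 < |\Phi_2|^m - q(\Phi') + \de$ would force $|\Phi_1 + \de| \lesssim \de$ on the slice of $\mathcal P_\de$ where $\Phi_2$ and $\Phi'$ are small, contradicting $\Phi_1 + \de \sim \de^{1-1/m} w_1$ for $|w_1|$ near the required radius $\de^{1-1/m} \gg \de$. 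The envelope-of-holomorphy detour is precisely what bridges this gap between what analytic discs directly deliver and what pseudoconvexification allows.
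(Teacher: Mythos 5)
Your proposal is correct and follows essentially the same route as the paper: linear analytic discs $\zeta\mapsto P_\delta+\zeta X$ place an incomplete balanced (Reinhardt) set in $I_{P_\delta}K_{G_\eps}$, and pseudoconvexity of $I_{P_\delta}\widetilde K_{G_\eps}$ then fills in the full polydisc. The only difference is presentational: the paper performs the filling-in step via a Hartogs figure (the shell $|X_2|=1$ plus the central disc) and the continuity principle, whereas you invoke the log-convex complete hull of the Reinhardt wedge $T_\delta$ together with monotonicity of envelopes of balanced domains --- two standard formulations of the same phenomenon.
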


Estimates for the Sibony and Kobayashi metrics on some model domains can be found in
\cite{For-Lee,Fu}.

\begin{corollary}
\label{cor} If $|q(z')|\lesssim|z'|^m,$ then
$$
S_{G_\varepsilon}(P_\delta;X)\asymp\wdt K_{G_\varepsilon}(P_\delta;X)\asymp|X_1|\delta^{\frac{1}{m}-1}+|X|.
$$
\end{corollary}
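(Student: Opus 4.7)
The upper bound is an immediate consequence of Proposition \ref{model}. Under the hypothesis $|q(z')|\lesssim|z'|^m$ we take $k=m$, making $\delta^{1/m-1/k}=1$, so Proposition \ref{model} yields
\[
\wdt K_{G_\varepsilon}(P_\delta;X)\lesssim|X_1|\delta^{1/m-1}+|X_2|+|X'|\lesssim|X_1|\delta^{1/m-1}+|X|.
\]
The ordering \eqref{order}, namely $S_{G_\varepsilon}\le\wdt K_{G_\varepsilon}$, propagates the same bound to the Sibony metric.

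For the matching lower bound, the $|X|$ piece is routine: by \eqref{order} and the inclusion $G_\varepsilon\subset\mathbb B_n(0,\varepsilon)$,
\[
S_{G_\varepsilon}(P_\delta;X)\ge C_{G_\varepsilon}(P_\delta;X)\ge C_{\mathbb B_n(0,\varepsilon)}(P_\delta;X)\asymp|X|/\varepsilon,
\]
since $P_\delta$ stays at distance $\asymp\varepsilon$ from the outer ball boundary.

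It remains to prove $S_{G_\varepsilon}(P_\delta;X)\gtrsim|X_1|\delta^{1/m-1}$, which is the essential content; by invariance we may reduce to $X=X_1 e_1$. By the definition of the Sibony metric, it suffices to construct a $\mathcal C^2$-function $u: G_\varepsilon \to [0,1)$ with $u(P_\delta)=0$, $\log u$ plurisubharmonic, and complex Hessian $L_u(P_\delta;e_1,\bar e_1)\gtrsim\delta^{2/m-2}$. The natural ingredient is the plurisubharmonic defining function $-r(z)=-\Re z_1+|z_2|^m-q(z')$ (note that $|z_2|^m$ is $\psh$ and the bound $|q(z')|\lesssim|z'|^m$ lets one absorb $q$), combined with a holomorphic factor vanishing at $P_\delta$ and a concave cutoff that enforces $u\le1$. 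A simpler Carath\'eodory-type barrier $u=|F|^2$ with $F:G_\varepsilon\to\D$ bounded holomorphic cannot suffice, since $0$ is not a point of local pseudoconvexity: all such $F$ extend to the envelope of holomorphy of $G_\varepsilon$, forcing $|F'(P_\delta)|$ to stay bounded as $\delta\to0$.

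The main obstacle is thus producing a genuinely $\psh$ (and not Carath\'eodory) barrier of the right form: simultaneously verifying $u\le 1$ globally, the log-$\psh$ condition, and the sharp Hessian order $\delta^{2/m-2}$ at $P_\delta$ in the normal direction requires careful balancing, and the non-integer exponent $1/m$ dictated by the $|z_2|^m$ curvature of the model boundary makes a direct invocation of Proposition \ref{lower} insufficient once $m>2$. The construction should follow, in spirit, the Fornaess--Lee barrier in \cite{For-Lee}, with the exponent $1/2$ used there replaced by $1/m$ to match the model.
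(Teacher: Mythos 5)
Your upper bound is exactly the paper's: apply Proposition \ref{model} with $k=m$ (so that $\delta^{1/m-1/k}=1$) and use the chain \eqref{order} to pass the bound down to $S_{G_\varepsilon}$. Your lower bound for the $|X|$ term via the Carath\'eodory metric of the ambient ball is also fine, and your observation that a Carath\'eodory-type barrier cannot produce the $\delta^{1/m-1}$ growth is correct and consistent with $C_D(z;X)\asymp|X|$.

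The gap is in the essential lower bound $S_{G_\varepsilon}(P_\delta;X)\gtrsim|X_1|\delta^{1/m-1}$: you correctly identify that one needs a plurisubharmonic barrier with complex Hessian of order $\delta^{2/m-2}$ in the $e_1$ direction at $P_\delta$, but you never produce it, ending with ``the construction should follow, in spirit, the Fornaess--Lee barrier.'' As written this is a description of what remains to be done, not a proof. The paper closes exactly this step by citing \cite[Remarks 4, 5]{For-Lee}, which already establish \eqref{sib} for the model domain with general exponent $m$ (under $-q(z')\lesssim|z'|^m$, which your hypothesis implies); no new construction is needed, only the precise quotation. Two smaller points: (a) ``by invariance we may reduce to $X=X_1e_1$'' is not legitimate, since $S$ is not a norm in $X$; what you actually want (and what the barrier method delivers, e.g.\ with a $u$ depending only on $z_1$) is the estimate $L_u(P_\delta;X)\gtrsim\delta^{2/m-2}|X_1|^2$ for \emph{arbitrary} $X$, after which the two separate lower bounds combine into the sum up to a constant; (b) your remark that Proposition \ref{lower} is ``insufficient once $m>2$'' has the logic reversed --- the Corollary is what shows Proposition \ref{lower} is sharp, not the other way around, so you should not expect to derive the Corollary from it in any case.
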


This corollary shows that the estimates in Proposition \ref{lower} are sharp.
\smallskip

\noindent{\it Proof of Corollary \ref{cor}.}
It follows by \cite[Remark 4,5]{For-Lee} that if $-q(z')\lesssim|z'|^m,$ then
\begin{equation}\label{sib}
S_{G_\varepsilon}(z;X)\gtrsim|X_1|\delta^{\frac{1}{m}-1}+|X|.
\end{equation}
Proposition \ref{model} implies the opposite inequality
$$S_{G_\varepsilon}(z;X)\le\wdt K_{G_\varepsilon}(z;X)\lesssim|X_1|\delta^{\frac{1}{m}-1}+|X|.\hfill\qed$$
\smallskip

\noindent{\it Proof of Proposition \ref{conc}.}
We may assume that $a=0$ and that the inward normal to $\partial D$ at $a$ is
$\left\lbrace \Re z_1<0, \Im z_1=0, z_2=0, z'=0 \right\rbrace $ and that $z_2$ is a
pseudoconcave direction. After dilatation
of coordinates and a change of the form $z\mapsto(z_1+cz_1^2,z_2,z'),$ we may get
$G_\eps\subset D$ for some $\eps>0,$ $m=2$ and $q(z')=|z'|^2.$ Then, by Proposition \ref{model},
$$
\wdt K_D(z;X)\le\wdt K_{G_\eps}(z;X)\lesssim\frac{|\langle\nabla d(z),X\rangle|}{{d(z)}^{1/2}}+|X|$$
if $z$ is small enough and lies on the inward normal at $a.$ Varying $a,$ we get the estimates for any
$z$ near $a.$  A similar argument together with (\ref{sib}) and a localization principle for the
Sibony metric (see \cite{For-Lee}) gives the opposite inequality
$$\widetilde K_D(z;X)\ge\widetilde S_D(z;X)\gtrsim\frac{|\langle\nabla d(z),X\rangle|}{{d(z)}^{1/2}}+|X|.\hfill\qed$$
\smallskip

\noindent{\it Proof of Proposition \ref{model}.} For simplicity, we assume that
$\eps=2$ and $q(z')\le|z'|^k,$ where $|\cdot|$ is the sup-norm
(the proof in the general case is similar).

It is enough to find constants $c,c_1>0$ such that for $0<\delta\ll 1,$
$$c_1\delta^{1-\frac{1}{m}}\D\times\D\times c\delta^{\frac{1}{k}-\frac{1}{m}}\D^{n-2}
\subset I_\delta:=I_{P_\delta}\wdt K_{G_\varepsilon},$$
where $\D$ denotes the unit disk in $\C$.

Take $X\in\C^n$ with $|X_2|=1,$ $|X_1|\le c_1\delta^{1-\frac{1}{m}},$
$|X'|\le c\delta^{\frac{1}{k}-\frac{1}{m}},$ and
set
$$\phi(\zeta)=P_\delta+\zeta X,\quad\zeta\in\D.$$
If $c<1$ and $0<\delta\ll 1,$ then $\phi(\D)\Subset\Bbb B_n(0,2).$
On the other hand,
$$
r(\phi(\zeta))<-\delta+|\zeta|.|X_1|-|\zeta|^m+|\zeta|^k|X'|^k.
$$
It follows that if $|\zeta|<\delta^\frac{1}{m},$ then
$r(\phi(\zeta))<(c_1+c^k-1)\delta,$ and if $|\zeta|\ge\delta^\frac{1}{m},$ then
$r(\phi(\zeta))<(c_1+c^k-1)|\zeta|^m.$ So, choosing $c_1=c^k<\frac{1}{2},$ we get
$\phi(\D)\Subset G$
and hence
$c_1\delta^{1-\frac{1}{m}}\overline{\D}\times\partial\D\times
c\delta^{\frac{1}{m}-\frac{1}{k}}\overline\D^{n-2}\subset I_\delta$.

Finally, using that $\{0\}\times\overline{\D}\times\{0'\}\subset I_\delta$ and that
$I_\delta$ is a pseudoconvex domain, we obtain the desired result by
Hartog's phenomenon.\qed
\smallskip

\section{Proof of Propositions \ref{dieu} and \ref{kobeps}}
\label{proof2}

\begin{proof*}{\it Proof of Proposition \ref{dieu}.}
As in the previous section, for $d(z)$ small enough, $z$ will
belong to the normal to $\partial D$ going through the point closest to $z$, which we take as the origin.
We make a unitary change of variables to have a new basis
$(\nu_a,u_a)$ of
vectors normal and parallel to $\partial D$, respectively.
Using different dilations
along the new coordinate axes
and the localization property of the
Kobayashi pseudometric, we can reduce  Proposition \ref{dieu}
to the following.
\end{proof*}

\begin{lemma}
\label{lemkob}
Let $G:=\{(z, w) \in \C^2: \Re z<\vert w\vert^2\} \cap \D^2$, where $\D$ is the unit disk in $\C$.
 Let
$P_\de:=
(-\de, 0) \in G, 0<\de<1$ and $\nu=(\al, \beta)$ be a vector in $\C^2.$
Then there exists   $\de_0=\de_0(\nu)>0$ such that for any $\de <\de_0$,
\begin{enumerate}
\item
 If $\vert \al \vert < 2 \sqrt 2 \de^{1/2} \vert \beta\vert$, then
$$K_{G} (p_\de, \nu) = \vert \beta\vert ;$$
while if $c_0:=\liminf_{\de\to0}|c_\delta|> 2 \sqrt 2$, there exists
$\gamma(c_0) >0$ such that $\liminf_{\de\to0} \delta^{1/6} K_G ((-\delta, 0) ; (c_\delta \delta^{1/2}, 1)) \ge \gamma(c_0)$.
\item
 If $\vert \al \vert \ge 2  \de^{1/2} \vert \beta\vert$ then
$$K_{G} (p_\de, \nu) \le \sqrt{2} \frac{\vert \al\vert}{\de^{3/4}} .$$
\item
 If $\vert \al\vert >7\de^{1/2} \vert \beta\vert$
then
 $$ K_G (p_\de, \nu) \ge \frac1{38} \frac{\vert \al\vert}{\de^{3/4}}.$$
 \end{enumerate}
\end{lemma}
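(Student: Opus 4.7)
I would split Lemma~\ref{lemkob} into its three statements, handling the upper bounds and equalities by explicit extremal disks and the lower bounds by Schwarz--Pick type estimates on reduced one-dimensional functions, after rescaling in the unbounded model $\tilde G:=\{\Re z<|w|^2\}$. For the equality in~(1), after rotating $\zeta$ and applying the automorphism $(z,w)\mapsto(z,e^{i\psi}w)$ of $G$ to normalize $\alpha,\beta\in\R_{>0}$, I would test with the disk $\phi(\zeta)=(-\delta+(\alpha/\beta)\zeta-\zeta^2,\,\zeta)$. The condition $\phi(\D)\subset G$ reduces, via $\zeta=x+iy$, to the real inequality $(\alpha/\beta)x-2x^2<\delta$ on $(-1,1)$, whose supremum $\alpha^2/(8\beta^2)$ is strictly less than $\delta$ precisely when $|\alpha|<2\sqrt2\,\delta^{1/2}|\beta|$. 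Shrinking the parameter to $R\D$ with $R\to 1^-$ enforces $|\phi_1|<1$ and yields $K_G(P_\delta,\nu)\le|\beta|$; the matching lower bound follows from composing with the projection $\pi_2\colon G\to\D$, which contracts Kobayashi.

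For the lower bound in~(1) when $|c_\delta|>2\sqrt2$, the exponent $\delta^{-1/6}$ should emerge from the dilation automorphisms $\Psi_\lambda(z,w)=(\lambda^2z,\lambda w)$ of $\tilde G$. Choosing $\lambda=(c_\delta\delta^{1/2})^{-1}$ transforms $(P_\delta,\nu)$ into $\bigl((-1/c_\delta^2,0),\,(c_\delta\delta^{1/2})^{-1}(1,1)\bigr)$ and embeds $G$ into $\tilde G\cap\D_{1/(c_\delta^2\delta)}\times\D_{1/(c_\delta\delta^{1/2})}$, yielding
\[
K_G(P_\delta,\nu)=\frac{1}{c_\delta\delta^{1/2}}\,K_{\Psi_\lambda(G)}\bigl((-1/c_\delta^2,0),(1,1)\bigr).
\]
It then suffices to bound the right-hand metric from below by a constant multiple of $\delta^{1/3}$. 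Re-running the Ansatz from the equality step in the rescaled picture, but now as an obstruction, the fact that $|c_\delta|$ lies above the critical value $2\sqrt2$ forces any admissible test disk into $\Psi_\lambda(G)$ to exit the bounding polydisk at rate $\delta^{-1/3}$, producing the claim $\delta^{1/6}K_G\ge\gamma(c_0)>0$.

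For~(2) I would use a richer Ansatz $\phi(\zeta)=(-\delta+a\zeta+c\zeta^2,\,b\zeta+d\zeta^2)$ with $(a,b)\parallel(\alpha,\beta)$ and $c,d$ chosen so that the worst direction of $\Re\phi_1-|\phi_2|^2$ becomes a perfect square balanced against $\delta$; admissibility falls at scale $t=|\alpha|/\delta^{3/4}$ up to the advertised $\sqrt2$. For~(3), write $\phi'(0)=\lambda\nu$ and $A_j:=\lambda\nu_j$. Schwarz--Pick applied to $\phi_2(\zeta)/\zeta\colon\D\to\D$ (with value $A_2$ at $0$) gives the sharp bound $|\phi_2(\zeta)|\le|\zeta|(|A_2|+|\zeta|)/(1+|A_2||\zeta|)$, so on $R\D$ the map $\phi_1$ takes values in the half-plane $\{\Re z<R^2(|A_2|+R)^2\}$. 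The half-plane Schwarz--Pick then yields
\[
|A_1|\le 2R(|A_2|+R)^2+2\delta/R \quad\text{for every }R\in(0,1).
\]
Combining this with $|A_2|=|A_1||\beta|/|\alpha|<|A_1|/(7\delta^{1/2})$ and optimising at $R\sim\delta^{1/4}$ produces a quadratic inequality for $|A_1|/\delta^{3/4}$ whose admissible root gives $|A_1|\le 38\,\delta^{3/4}$, i.e.\ $K_G\ge|\alpha|/(38\,\delta^{3/4})$.

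The main obstacle is the second statement of~(1): the exponent $\delta^{-1/6}$ does not follow from any single Schwarz--Pick estimate on the auxiliary function $\phi_1-\phi_2^2$, which alone would only yield a constant lower bound past the threshold $2\sqrt2$; one must turn the qualitative non-admissibility of the extremal disk above $2\sqrt2$ into a quantitative escape rate in the rescaled picture, which is the delicate step. The numerical constants $\sqrt2$, $1/38$, and $7$ in the other items are a matter of bookkeeping once the Ansatz and the Schwarz--Pick decomposition are in place.
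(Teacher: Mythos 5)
The decisive gap is the second statement of item (1), and you have in effect flagged it yourself: the rescaling by $\Psi_\lambda(z,w)=(\lambda^2z,\lambda w)$ with $\lambda=(c_\delta\delta^{1/2})^{-1}$ only restates the problem (it reduces the claim to $K_{\Psi_\lambda(G)}((-1/c_\delta^2,0);(1,1))\gtrsim\delta^{1/3}$), and the assertion that non-admissibility of the extremal disk above the threshold $2\sqrt2$ ``forces an escape rate $\delta^{-1/3}$'' is precisely what has to be proved. The paper's mechanism is concrete and does not pass through the dilation: assume for contradiction that $K_G((-\delta,0);(c\delta^{1/2},1))\le\gamma\delta^{-1/6}$, take a competing disk $\phi=(f,g)$ defined on $D(0,r)$ with $r>\gamma^{-1}\delta^{1/6}$, and use $G\subset\D^2$ to get Cauchy estimates $|a_k|,|b_k|\le r^{-k}$ on the Taylor coefficients. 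Evaluating $\Re f<|g|^2$ at $|\zeta|=\delta^{1/2}$, the tails of order $\ge 3$ in $f$ and in $|g|^2$ are $O(r^{-3}|\zeta|^3)+O(r^{-2}|\zeta|^3)=O(\gamma^2\delta)$ --- this balance $r^{-3}\delta^{3/2}\sim\delta$ is exactly where the exponent $1/6$ comes from --- and choosing $\arg\zeta\in[-\pi/4,\pi/4]$ so that $\Re(a_2\zeta^2)\ge0$ yields $(c/\sqrt2-2)\delta\le 10\gamma^2\delta$, hence $\gamma\ge\bigl(\tfrac1{10}(c_0/\sqrt2-2)\bigr)^{1/2}$. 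Without some version of this Taylor-coefficient argument your proposal does not establish this item.

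Two further points. In item (3) your route (Schwarz--Pick for $g(\zeta)/\zeta$ followed by the Poincar\'e metric of the half-plane $\{\Re z<R^2(|A_2|+R)^2\}$ applied to $f$) is genuinely different from the paper's, which splits off the normal cone $|\alpha|\ge C_0|\beta|$ (handled by Fu's results) and treats the rest by a Borel--Carath\'eodory argument (Lemmas \ref{modeps} and \ref{realf}); your inequality $|A_1|\le 2R(|A_2|+R)^2+2\delta/R$ is correct and gives a cleaner, uniform treatment, \emph{but} substituting $|A_2|\le|A_1|/(7\delta^{1/2})$ and $R\sim\delta^{1/4}$ produces a quadratic inequality $av^2-bv+c\ge0$ in $v=|A_1|\delta^{-3/4}$, which only yields $v\le v_-$ \emph{or} $v\ge v_+$; selecting ``the admissible root'' needs justification, e.g.\ by applying the same estimate to $\phi(s\zeta)$, $s\in[0,1]$, for which $v(s)=sv$ is continuous, vanishes at $s=0$, and therefore cannot cross the forbidden interval $(v_-,v_+)$. (The paper's convexity manipulation with $\varphi$, $\psi$ and the unique critical point $r_0$ is designed to avoid this two-root ambiguity.) Finally, for the equality in (1) your disk $(-\delta+(\alpha/\beta)\zeta-\zeta^2,\zeta)$ has first component of modulus up to $1+\delta+O(\delta^{1/2})$ near $|\zeta|=1$, so restricting to $R\D$ forces $R\le 1-c\delta^{1/2}$ and gives only $K_G\le|\beta|(1+O(\delta^{1/2}))$ rather than $K_G=|\beta|$; the paper's quadratic coefficient $-\alpha^2/(8\delta)$, of modulus $c^2/8<1$ on the relevant parameter disk, is what keeps the image inside $\D^2$.
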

\begin{proof}

(1).
By the Schwarz lemma we have $K_{G} (p_\de, \nu) \ge \vert \beta\vert$ for every $\de, \nu$.
Conversely, let $c:=\frac{|\al|}{\de^{1/2} \vert \beta\vert}<2\sqrt 2$.
Consider an analytic disk $\Phi :\C \to \C^2$, $ \Phi(t) =\left(f(t),g(t) \right)
= \left(-\de+ \al t - \frac{\al^2}{8\de} t^2, \beta t\right)$.

It will be enough to
show that $\Phi(t)\in G$ for $|t|< 1/|\beta|$.  Clearly $g(t)\in \D$.
Since $|\al t |< 2\sqrt 2 \de^{1/2}$
and $\left| \frac{\al^2}{8\de} t^2\right| < \frac{c^2}8 <1$, for $\de_0$ small enough
we have $f(t)\in \D$.

Now let $\al = |\al|e^{i\theta}$, and define $x,y\in \R$ by
$t= \de^{1/2} (x+iy) e^{-i\theta}/|\beta|$. Then
\begin{multline*}
\frac1\de \left(|g(t)|^2 -\Re f(t)\right)  =
\left( 1 +\frac{c^2}{8}\right) x^2 - c x + \left( 1 -\frac{c^2}{8}\right) y^2 + 1 \\
=
\left( 1 +\frac{c^2}{8}\right) \left( x - \frac{c}{2\left( 1 +\frac{c^2}{8}\right)}\right)^2
+ \left( 1 -\frac{c^2}{8}\right) y^2  + \frac{4 -\frac{c^2}{2}}{4 +\frac{c^2}{2}} > 0.
\end{multline*}

Observe that if $\Phi =(f,g)  \in \mathcal O( \D, G) $, then
$\Phi^\theta  \in \mathcal O( \D, G) $ with
$$
\Phi^\theta (\zeta):= \left( f(e^{i\theta}\zeta) , e^{-i\theta} g(e^{i\theta}\zeta) \right) ,
$$
and $(\Phi^\theta  )'(0) = (e^{i\theta}f'(0), g'(0))$.
So we may assume $c>0$.

If $c>2\sqrt2$, recall that
$$
K_G(p;X)^{-1} = \sup \left\{
r >0 : \exists \varphi \in \mathcal O(D(0,r), G): \varphi(0)=p, \varphi'(0)=X
\right\}.
$$
Suppose that there exists $\gamma \in (0,1)$ and a
sequence $(\delta_j)\to 0$, $c_j>0$ with $\liminf_j c_j=c_0$ such that
$$
k_j:=K_G ((-\delta_j, 0) ; (c_j\delta_j^{1/2}, 1)) \le \gamma \delta_j^{-1/6} .
$$
Choose $r_j$ such that $\gamma^{-1} \delta_j^{1/6} < r_j < 1/k_j$. Let
$\varphi_j (\zeta) = (f_j(\zeta),g_j(\zeta))$ be as in the definition.
>From now on we drop the indices $j$.

Write
$$
f(\zeta) = \sum_{k\ge 0} a_k \zeta^k, \quad
g(\zeta) = \sum_{k\ge 0} b_k \zeta^k.
$$
Since $G\subset \D^2$, the Cauchy estimates
imply $|a_k|, |b_k| \le r^{-k}$. Suppose henceforth that
$|\zeta| \le r/2$. Then
$$
f(\zeta) = -\delta + c\delta^{1/2} \zeta + a_2 \zeta^2 + \sum_{k\ge 3} a_k \zeta^k,
$$
and $\left| \sum_{k\ge 3} a_k \zeta^k \right| \le 2 r^{-3} |\zeta|^3$. Likewise,
$$
|g(\zeta)|^2 = |\zeta|^2 \left| 1 + \sum_{k\ge 2} b_k \zeta^{k-1}
\right|^2,
\quad \left| \sum_{k\ge 2} b_k \zeta^{k-1} \right| \le 2 r^{-2} |\zeta|,
$$
so, whenever $|\zeta|\le r^2$,
 $|g(\zeta)|^2 \le |\zeta|^2 + 8 r^{-2} |\zeta|^3$. All together,
using the definining function of $G$,
$$
-\delta + \Re \left( c\delta^{1/2} \zeta + a_2 \zeta^2 \right)
\le
|\zeta|^2 + 2 \gamma^{3} \delta^{-1/2} |\zeta|^3
+ 8 \gamma^{2} \delta^{-1/3} |\zeta|^3
\le
|\zeta|^2 + 10 \gamma^{2} \delta^{-1/2} |\zeta|^3.
$$
Now set $\zeta = \delta^{1/2} e^{i\theta}\in D(0,r^2)$ for $j$ large enough.
We can choose $\theta \in \left[ -\frac\pi4, \frac\pi4\right]$ so that
$\Re(a_2 e^{2i\theta}) \ge 0$. We have
$$
-\delta + \frac{c}{\sqrt2} \delta \le
-\delta + \Re \left( c\delta^{1/2} \zeta + a_2 \zeta^2 \right)
\le
\delta + 10 \gamma^{2} \delta ,
$$
which implies $\gamma \ge \left(\frac1{10} \left(  \frac{c_0}{\sqrt2} - 2 \right) \right)^{1/2}>0$.

(2).
We proceed as in the first case of (1) with
$ \Phi(t)
= \left(-\de+\la\al t,\la\beta t +\frac{t^2}2\right) \in \D^2$ for
$\de_0$ small enough and $|\la \al|, |\la \bt| <1/2$.
Then
 $ \Phi(t) \in G$ if and only if
 $$ -\de+  \left|\la\al t \right| <\Bigl|\la\beta t+\frac{t^2}2\Bigr|^2,
 \quad \forall t \in \D,
 $$
 which is true when
 $$\frac{\vert t\vert^4}{4} -\vert \la \beta\vert \vert t\vert^3 >
 -\de+\vert \la \al\vert \vert t\vert, \mbox{ i.e. }
 \frac{\vert t\vert^4}{4}+ \de > \vert \la \beta\vert \vert t\vert^3
 +\vert \la \al\vert \vert t\vert.
 $$
 If we now assume $|\la| < \frac1{\sqrt{2}} \frac{\de^{3/4}}{\vert \al\vert}$,
 using the fact that $a^4+b^4 \ge a^3b+ab^3$ for any $a,b \ge 0$,
 $$
 \frac{\vert t\vert^4}{4}+ \de >
 \frac{\vert t\vert^3}{2 \sqrt{2}} \de^{1/4} + \frac{\vert t\vert}{ \sqrt{2}} \de^{3/4}
\ge   \vert t\vert^3 \frac{|\la \al|}{2\de^{1/2}}+\left|\la\al t \right|,
 $$
 and the assumption on $|\al|$ gives the required inequality.

 (3).
 When $|\al| \ge C_0 |\beta|,$ this follows from the results of Fu, as
 explained in the Remark after Proposition \ref{lower}.
For $|\al| \le C_0 |\beta|,$ this is a special case of Lemma \ref{modeps} below.
\end{proof}

\begin{proof*}{\it Proof of Proposition \ref{kobeps}.}

For any $z\in D$, the function $f_z (y)=|z-y|$, $y\in \partial D$,
must attain its minimum.
Let $U_0$ be an open neighborhood of $a$. Since $\partial D \setminus U_0$
is closed,
if $z \in D \cap U_1$, where $U_1$ is a small enough neighborhood of $a$,
then $f_z$ will assume its minimum in $U_0\cap \partial D$. Let $a'$ be a
point where this minimum is attained. Since $f_z$ is
$\mathcal C^1$-smooth outside of $\partial D$ and $\nabla f_z(y)$
is parallel to $y-z$, by
Lagrange multipliers the outer normal vector $\nu_{a'}$
is parallel to $z-a'$.
Since the distance is minimal, the semi-open segment $[z,a')$ must lie inside $D$, therefore $z \in n_{a'}= a' + \R_-^* \nu_{a'}$.

By taking $a'$ as our new origin and
making a unitary change of variables, we may assume that locally
$D = \{ \zeta : \Re \zeta_1 < O(|\zeta_2|^{1+\eps} + |\Im \zeta_1|^{1+\eps}) \}$, so that after appropriate dilations we may
assume that $D\cap U_0 \subset \Om_\xi$, the model domain used in the following lemma, with $\xi=1+\eps$. We use the localization property
of the Kobayashi-Royden pseudometric.
The constants implied in the "$O$" above depend only on the neighborhood $U_0$ of $a$.
To get uniform constants, we cover $\partial D$ by a finite number of
neighborhoods of the type $U_1$.
\end{proof*}

\begin{lemma}
\label{modeps}
Let
$$\Om_\xi:=\{(z, w) \in \bc^2: \Re z<\vert w\vert^\xi + |\Im z|^\xi
\} \cap \D^2,$$ where  $\xi >1$.
 Let
$p_\de:=
(-\de, 0) \in \Om_\xi, \de>0$ and $\nu=(\al, \beta)$ be a vector in $\bc^2.$
Let $C_0>0$.

Then there exists  universal constants $C_1, C_2$ (depending on $\xi, C_0$) such that
if $\vert \al\vert >C_1\de^{(\xi-1)/\xi} \vert \beta\vert$
and $|\al| \le C_0 |\beta|$,
then
 $$ K_{\Om_\xi} (p_\de, \nu) \ge C_2 \frac{\vert \al\vert}{\de^{1-\frac1{2\xi}}},\quad \forall \de>0.$$
\end{lemma}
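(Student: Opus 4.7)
The plan is to argue by contradiction, in the style of the proof of Lemma~\ref{lemkob}(1). Suppose $K_{\Om_\xi}(p_\delta,\nu)<C_2|\alpha|/\delta^{1-1/(2\xi)}$, so there exists a holomorphic map $\phi=(f,g)\colon D(0,R)\to\Om_\xi$ with $\phi(0)=p_\delta$, $\phi'(0)=\nu$, and $R>\delta^{1-1/(2\xi)}/(C_2|\alpha|)$. Since $\Om_\xi\subset\D^2$, Cauchy estimates on $D(0,R)$ bound the Taylor coefficients of $f$ and $g$ by $R^{-k}$, and the Schwarz lemma yields $|g(\zeta)|\le|\zeta|/R$ since $g(0)=0$. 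The strategy is to substitute a carefully chosen $\zeta_0\in D(0,R)$ into the defining inequality $\Re f(\zeta_0)<|g(\zeta_0)|^\xi+|\Im f(\zeta_0)|^\xi$ of $\Om_\xi$ and derive a contradiction.

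Take $\zeta_0=te^{i\theta}$ with $t$ of order $\delta/|\alpha|$, chosen so that the leading linear contribution $\alpha\zeta_0$ to $f(\zeta_0)$ has modulus $\sim\delta$. Select $\theta$ in a $\pi/2$-arc centered on $-\arg\alpha$: by a pigeonhole argument identical to the one in Lemma~\ref{lemkob}(1), such a $\theta$ can be found so that simultaneously $\Re(\alpha\zeta_0)\ge|\alpha|t/\sqrt2$ and $\Re\bigl(\tfrac12 f''(0)\zeta_0^2\bigr)\ge 0$. Combined with the Cauchy tail, this gives $\Re f(\zeta_0)\ge -\delta+|\alpha|t/\sqrt 2-O(t^3/R^3)$. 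From the Taylor expansions of $f$ and $g$ (with Cauchy bounds on the coefficients) one estimates
$$
|g(\zeta_0)|^\xi+|\Im f(\zeta_0)|^\xi\lesssim(|\beta|t)^\xi+(|\alpha|t)^\xi+\bigl(t^2/R^2\bigr)^\xi.
$$
The hypothesis $|\alpha|\le C_0|\beta|$ absorbs the second term into the first. The hypothesis $|\alpha|/|\beta|>C_1\delta^{(\xi-1)/\xi}$ combined with $|\alpha|t\sim\delta$ yields $(|\beta|t)^\xi\lesssim\delta/C_1^\xi$, small for $C_1$ large. The term $(t^2/R^2)^\xi\lesssim C_2^{2\xi}\delta$ is small for $C_2$ small. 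Hence for appropriate $C_1$ and $C_2$ (depending only on $\xi$ and $C_0$), the right-hand side of the defining inequality is bounded by a small multiple of $\delta$, while the left-hand side lower bound exceeds $\delta$ by an order-one factor (after taking $|\alpha|t=\eta\delta$ with $\eta>\sqrt2$ sufficiently large), producing the required contradiction.

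The main obstacle is controlling the cubic-and-higher Cauchy tail $O(t^3/R^3)$ in the lower bound on $\Re f(\zeta_0)$. A direct Cauchy estimate yields $(t/R)^3\lesssim C_2^3\delta^{3/(2\xi)}$; for $\xi\le 3/2$ this is $\lesssim\delta$ and can be absorbed, but for $\xi>3/2$---which includes the important case $\xi=2$---it exceeds $\delta$ as $\delta\to 0$, breaking the naive comparison. Overcoming this will require either sharpening the coefficient bounds via Schwarz--Pick applied to the M\"obius-transformed function $(f+\delta)/(1+\delta f)\in\mathcal O(D(0,R),\D)$ (which vanishes at $0$), or extending the pigeonhole argument to simultaneously arrange $\Re(\tfrac16 f'''(0)\zeta_0^3)\ge 0$ so that the cubic term contributes favorably rather than as an error. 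The techniques of Fu~\cite{Fu} for the purely normal direction provide a model for carrying out such refinements, and here they would be adapted to the near-normal regime defined by the hypothesis $|\alpha|/|\beta|>C_1\delta^{(\xi-1)/\xi}$.
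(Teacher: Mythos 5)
Your overall strategy (contradiction, Taylor expansion, substitution into the defining function of $\Om_\xi$) is the same as the paper's, and your estimates of the right-hand side $|g|^\xi+|\Im f|^\xi$ are essentially those of the paper. But the step you flag as "the main obstacle" is a genuine gap, not a technicality, and your proposed repairs do not close it. Evaluating at a single point $\zeta_0$ with $|\alpha|t\sim\delta$ forces you to control the tail $\sum_{k\ge 3}a_k\zeta_0^k$, and with $|a_k|\le R^{-k}$ the $k$-th term is of order $(t/R)^k\sim (C_2)^k\delta^{k/(2\xi)}$, which is $o(\delta)$ only for $k>2\xi$. So for general $\xi$ (already for $\xi=2$, the case needed in Lemma \ref{lemkob}(3)) you would have to arrange favorable signs for $\lceil 2\xi\rceil$ Taylor terms simultaneously; a single angular pigeonhole cannot do this (the bad arcs for different $k$ can cover the whole circle), and the M\"obius/Schwarz--Pick trick only reproduces the same Cauchy bounds $|a_k|\lesssim R^{-k}$, so it gains nothing.

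The idea you are missing is to replace pointwise evaluation by a supremum over the circle $|t|=r$ and to invoke the Borel--Carath\'eodory theorem. The paper's Lemma \ref{realf} shows that for $f_0=f+\delta$ (which vanishes at $0$) one has $\sup_{|t|=r}\Re f_0(t)\ge \tfrac12|a_1|r$ for \emph{every} $r$, with no error term from the higher coefficients at all — the mean of $|f_0|$ over the circle already isolates $|a_1|r$, and Borel--Carath\'eodory converts the bound on $\sup|f_0|$ into one on $\sup\Re f_0$. Combining this with the upper bounds on $|g|^\xi$ and $|\Im f|^\xi$ yields a purely real inequality $\va(r)=2^{2\xi+1}r^{2\xi}+2^\xi(1+C_0^\xi)|\la|^\xi r^\xi-|\la\al|r+2\de>0$ valid for all $r\in(0,1/2)$, and the lemma then follows from an elementary calculus analysis of the unique critical point of $\va$ (locating it below $r_1\asymp\de/|\la\al|$ and exploiting $\va'(r_1)>0$ together with the hypothesis $|\al|>C_1\de^{(\xi-1)/\xi}|\beta|$). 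Without this (or an equivalent) device your argument proves the lemma only for $1<\xi\le 3/2$.
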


\begin{proof}
 We need an elementary lemma about the growth of holomorphic functions.

 \begin{lemma}
 \label{realf}
 Let $f_0(z)=\sum_{k \ge 1} a_k z^k$ be a holomorphic function on $\D$. Then
$$
M(r):=\sup_{\vert t\vert =r} \Re f_0(t) \ge \frac{\vert a_1 r\vert}2,\quad  \forall r \in (0,1).
$$
\end{lemma}
\begin{proof}
First
$$N(r):=\sup_{\vert t\vert =r} \vert f_0(t)\vert
 \ge \int_{\vert t\vert =r} \left| \sum_{k \ge 1} a_k t^k \right| \frac{dt}{2\pi}
 =r\int_{\vert t\vert =r} \left| a_1 +\sum_{k \ge 2}a_kt^{k-1}\right| \frac{dt}{2\pi}$$
$$ \ge r \Big \vt \int_{\vert t \vert=r}
(a_1+\sum_{k \ge 2}a_kt^{k-1} )\frac{dt}{2\pi} \Big \vert=\vert a_1 r\vert.
$$
Next, fix $r \in (0,1)$. For $r' \in (0, r)$,
by Borel-Caratheodory's theorem (note that $f_0(0)=0$) we obtain
$$M_r \ge \frac{N_{r'} (r-r')}{2r'} \ge \frac{\vert a_1\vert}2 (r-r').$$
Letting $r' \to 0$, we get the lemma.
\end{proof}

Returning to the lower estimate for $\Om_\xi,$
we may assume that $\beta=1$, $|\al|\le C_0$.
Consider an arbitrary analytic disk $\Phi=(f,g): \D \to \Om_\xi$ such that
\begin{equation}
\label{dieu3}
\Phi (0)=p_{\de}, \Phi' (0) =\la \nu. 
\end{equation}
Let's expand $f, g$ into Taylor series
$$f (t)= -\de +\la\al t+ a_2 t^2+\cdots , g (t)=\la t+\tilde g(t).$$
By the Schwarz Lemma and Cauchy inequality, we can see that
$$\vert \tilde g(t)\vert \le 2 \vert t\vert^2,\quad \forall \vert t\vert<1/2.\
$$
On a circle $\vert t\vert =r, r<1/2$, by the lemma above we have
$$\sup_{\vert t\vert =r} \Re f(t) \ge \frac{\vert \la r \vert}2 \vert \al\vert  -\de.$$
In view of the estimate on $\tilde g(t)$
 and convexity of the function $x^t, x>0, t\ge 1,$ we get
$$
\sup_{\vert t\vert =r} \vert g(t) \vert^\xi \le  2^{\xi-1} (\vert \la r \vert^\xi+2^\xi r^{2\xi}).
$$
Likewise,
$$
\sup_{\vert t\vert =r} \vert \Im f(t) \vert^\xi \le  2^{\xi-1} (\vert C_0 \la r \vert^\xi+2^\xi r^{2\xi}).
$$
Combining these estimates, we obtain the following basic inequality from which we will deduce a contradiction.
\begin{equation}
\label{basic}
\va (r):= 2^{2\xi+1} r^{2\xi}+2^\xi (1+C_0^\xi) \vert \la\vert ^\xi r^\xi- \vert \la \al\vert r+2 \de>0,\quad \forall 0<r<1/2.
\end{equation}
We have
\begin{equation}
\label{dieu4}
\va' (r)=\xi 2^{2\xi +2}r^{2\xi-1}+\xi 2^\xi (1+C_0^\xi) \vert \la\vert^\xi r^{\xi-1}-\vert \la \al \vert.
\end{equation}
Notice that
$$\va' (0)<0, \va'(1/2)>8\xi-\vert \la \al \vert >8-\vert \la \al \vert>0,$$ where the last inequality follows from the Schwarz Lemma.
Moreover, since $\xi >1$ we have $\va'' (r)>0$ for every $r>0$, so the equation $\va' (r)=0$ has a {\it unique} root $r_0 \in (0,1/2)$.
Now we have
\begin{equation}
\label{dieu5}
2\xi \va (r)= r\va'(r) +\psi (r),
\end{equation}
where
\begin{equation}
\label{dieu6}
\psi (r) =\xi 2^\xi (1+C_0^\xi) \vert \la \vert^\xi r^\xi-(2\xi-1)\vert \la \al \vert r+4\de \xi.
\end{equation}
Since $\va' (r_0)=0$, from \eqref{basic}, \eqref{dieu5} we infer that $\psi (r_0)>0.$
It also follows from \eqref{dieu4} that
$$\xi 2^\xi (1+C_0^\xi) \vert \la \vert^\xi r_0^{\xi-1} <\vert \la \al\vert.$$
Therefore
$$\xi 2^\xi \vert \la\vert^\xi (1+C_0^\xi) r_0^\xi <\vert \la \al \vert r_0.$$
Since $\psi (r_0)>0$, from \eqref{dieu6} and the above inequality we get
$$\vert \la \al \vert r_0 <\frac{2\xi}{\xi -1} \de.$$
Thus
$$r_0 <r_1:=\frac{2\xi}{\xi -1}(\frac{\de}{\vert \la \al\vert}).$$
This implies that
\begin{equation}
\label{dieu7}
0<\frac1{\xi 2^\xi} \va' (r_1)=2^{\xi+2} r_1^{2\xi-1} + (1+C_0^\xi) \vert \la\vert^\xi r_1^{\xi-1} -\frac{\vert \la \al\vert}{\xi 2^\xi}.
\end{equation}
Now we can choose $C_1>0$ depending only on $\xi$ and $C_0$
such that if $\vert \al \vert >C_1 \de^{(\xi-1)/\xi}$ then
\begin{equation}
\label{dieu8}
\vert \la\vert^\xi r_1^{\xi-1} <\frac12 \frac{\vert \la \al\vert}{\xi 2^\xi}.
\end{equation}
Putting \eqref{dieu7} and \eqref{dieu8} together, we get
$$\frac12 \frac{\vert \la \al\vert}{\xi 2^\xi} < 2^{\xi+1} r_1^{2\xi-1}.$$
Rearranging this inequality, we obtain
$$\vert \la \al\vert <C_2 \de^{(2\xi -1)/2\xi},$$
where $C_2>0$ depends only on $\xi$.
The desired lower bound follows.
\end{proof}

\section{Properties of the new pseudometric}
\label{propnew}

We list some properties of $\wdt K_D$
similar to those  of $K_D.$

\begin{proposition}\label{list} Let $D\subset\C^n$ and
$G\subset\C^m$ be domains.
\smallskip

(i) If $f\in\OO(D,G),$ then $\wdt K_D(z;X)\ge\wdt K_G(f(z);f_{\ast,z}(X)).$
\smallskip

(ii) $\wdt K_{D\times G}((z,w);(X,Y))=\max\{\wdt K_D(z;X),\wdt K_G(w;Y)\}.$
\smallskip

(iii) If $(D_j)$ is an exhaustion of $D$ by domains in $\C^n$
(i.e. $D_j\subset D_{j+1}$ and $\cup_j D_j=D$) and
$D_j\times\C^n\ni(a_j,X_j)\to (a,X)\in D\times\C^n,$ then
$$\limsup_{j\to\infty}\wdt K_{D_j}(a_j;X_j)\le\wdt K_D(a;X).$$
In particular, $\wdt K_D$ is an upper semicontinuous function.
\end{proposition}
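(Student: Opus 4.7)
The plan for Proposition \ref{list} proceeds item by item.

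For (i), I would start from the standard contractibility of the Kobayashi pseudometric, which translates into the inclusion $I_z K_D\subset f_{*,z}^{-1}(I_{f(z)}K_G)\subset f_{*,z}^{-1}(I_{f(z)}\wdt K_G)$. The right-hand set is a balanced domain in $\C^n$, since balancedness is preserved under linear preimages, and it is pseudoconvex because the preimage of a pseudoconvex open set under a holomorphic map is pseudoconvex (pull back a local psh defining function). Since $I_z\wdt K_D$ is by definition the smallest balanced pseudoconvex domain in $\C^n$ containing $I_zK_D$, it must already be contained in $f_{*,z}^{-1}(I_{f(z)}\wdt K_G)$, which is exactly the inequality of Minkowski functionals asserted in (i).

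For (ii), the inequality $\wdt K_{D\times G}((z,w);(X,Y))\ge\max\{\wdt K_D(z;X),\wdt K_G(w;Y)\}$ follows by applying (i) to the two coordinate projections $D\times G\to D$ and $D\times G\to G$. For the reverse inequality I would use the product formula $K_{D\times G}((z,w);(X,Y))=\max\{K_D(z;X),K_G(w;Y)\}$, which gives $I_{(z,w)}K_{D\times G}=I_z K_D\times I_w K_G$, together with the fact that the envelope of holomorphy of a Cartesian product of two open sets in complementary coordinate spaces of $\C^{n+m}$ is the product of their individual envelopes of holomorphy (a Hartogs-type extension, one variable group at a time). Hence $I_{(z,w)}\wdt K_{D\times G}=I_z\wdt K_D\times I_w\wdt K_G$, and the Minkowski functional of a product of balanced domains is the max of the two factor Minkowski functionals.

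For (iii), set $u_j(Y):=\wdt K_{D_j}(a_j;Y)$; each $u_j$ is the Minkowski functional of the balanced pseudoconvex domain $I_{a_j}\wdt K_{D_j}$, hence a non-negative, plurisubharmonic, $\C$-homogeneous function on $\C^n$ (meaning $u_j(\lambda Y)=|\lambda|u_j(Y)$). Pick $j_0$ such that $a_j$ lies in a compact subset of $D_{j_0}$ for $j\ge j_0$; the monotonicity $\wdt K_{D_j}\le K_{D_j}\le K_{D_{j_0}}$ then provides a local uniform bound on $\{u_j\}_{j\ge j_0}$. Consequently, $v(Y):=\limsup_{k\to\infty,\,Y_k\to Y}u_k(Y_k)$ has a usc regularization $v^*$ that is plurisubharmonic on $\C^n$, by the classical fact about limits of locally uniformly bounded psh functions; $\C$-homogeneity passes from $u_j$ to $v$ and then to $v^*$. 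The well-known upper semicontinuity of the Kobayashi pseudometric under exhaustions, $\limsup_j K_{D_j}(a_j;Y_j)\le K_D(a;Y)$ for $Y_j\to Y$, combined with the upper semicontinuity of $K_D(a;\cdot)$, yields $v^*\le K_D(a;\cdot)$. Since $\wdt K_D(a;\cdot)$ is, by its extremal definition, the largest non-negative $\C$-homogeneous psh minorant of $K_D(a;\cdot)$, we conclude $v^*\le\wdt K_D(a;\cdot)$. Evaluating at $X$ yields $\limsup_j\wdt K_{D_j}(a_j;X_j)\le v(X)\le v^*(X)\le\wdt K_D(a;X)$, which is the main inequality of (iii); the upper semicontinuity of $\wdt K_D$ on $D\times\C^n$ follows as the special case $D_j\equiv D$.

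The main technical hurdle is the envelope argument in (iii): one must carefully verify that positive $\C$-homogeneity really does pass to the upper regularization $v^*$, and that the local uniform bound on $\{u_j\}$ is genuine (both checks are short but indispensable). By contrast, (i) and (ii) are essentially translations of standard Kobayashi properties into the language of envelopes of holomorphy of balanced domains in $\C^n$, using only monotonicity of the envelope under inclusion and its commutation with Cartesian products.
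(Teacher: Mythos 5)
Your proof is correct, but it travels a genuinely different road from the paper's, especially in parts (i) and (iii). Throughout you replace the paper's ``envelope of holomorphy'' language by the equivalent characterization of $\E(E)$, for $E$ balanced, as the smallest pseudoconvex balanced domain containing $E$ (equivalently, $\wdt K_D(z;\cdot)$ is the largest nonnegative $\C$-homogeneous psh minorant of $K_D(z;\cdot)$); this standard fact from Jarnicki--Pflug is the engine of your whole argument and should be stated once explicitly, since the paper only records the weaker fact that $\E(E)$ is balanced. In (i) you pull back ($I_zK_D\subset f_{*,z}^{-1}(I_{f(z)}\wdt K_G)$, a balanced pseudoconvex set, hence containing the envelope), whereas the paper pushes forward the envelope through $f_{*,z}$ and has to fuss with the rank of $f_{*,z}$; your variant is arguably cleaner. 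Part (ii) is the same argument in both. In (iii) the paper constructs correcting affine maps $\Phi_j$ and inner domains $G_j\subset\Phi_j(D_j)$ so as to reduce to the monotone convergence $\E(I_aK_{G_j})\nearrow\E(I_aK_D)$; you instead work directly with the Minkowski functionals $u_j$ as locally uniformly bounded psh functions, form the regularized upper limit $v^*$, and dominate it by $K_D(a;\cdot)$ via Royden's $\limsup$ theorem for the Kobayashi metric under exhaustions. Two small points you should make explicit: the step from the pointwise $\limsup_j u_j$ to your $v$ (a $\limsup$ along converging base points) being controlled by the regularization is exactly Hartogs' lemma, and the $\C$-homogeneity of $v^*$ needs the one-line check that upper regularization commutes with the dilations $Y\mapsto\lambda Y$. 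What your route buys is the avoidance of the geometric constructions $\Phi_j$, $G_j$; what it costs is an explicit reliance on Royden's theorem and on the psh characterization of the envelope, which the paper's main proof keeps implicit (its remark after the proof sketches a third argument, via exhaustion of $I_aK_D$ by balanced domains with continuous Minkowski functions, that is closer in spirit to yours).
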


\noindent{\it Proof.}
Denote by $\mathcal E(P)$ the envelope of holomorphy of a
domain $P\subset\C^k.$

 (i) If $k=\mbox{rank} f_{\ast,z},$
then $f_{\ast,z}(I_z K_D)\subset I_{f(z)} K_G$ is a balanced domain in $\C^k$ with
$f_{\ast,z}(\E(I_z K_D))$ as the envelope of holomorphy. It follows that
$f_{\ast,z}(\E(I_z K_D))\subset\E(I_{f(z)}K_G)$ which finishes the proof.
\smallskip

(ii) The Kobayashi metric has the product property
$$K_{D\times G}((z,w);(X,Y))=\max\{\wdt K_D(z;X),\wdt
K_G(w;Y)\},\quad\mbox{i.e.}$$
$$I_{(z,w)}K_{D\times G}=I_z K_D\times I_w
K_G.$$
 Then
$$\E(I_{(z,w)}K_{D\times G})=\E(I_z K_D)\times\E(I_w
K_G),$$ i.e. $\wdt K$ has the product property.
\smallskip

(iii) The case $X=0$ is trivial. Otherwise, after an unitary
transformation, we  may assume that all the components $X^k$ of
$X$ are non-zero. Set
$$\Phi_j(z)=(a^1+\frac{X^1}{X^1_j}(z^1-a^1_j),\dots,a^n+\frac{X^n}{X^n_j}(z^n-a^n_j)),
\quad j\gg 1.$$ We may find $\eps_j\searrow 0$ such that if
$G_j=\{z\in\C^n:\Bbb B_n(z,\eps_j)\subset D\},$ then
$G_j\subset\Phi_j(D_j).$ It follows that $\wdt K_{G_j}(a;X)\ge\wdt
K_{D_j}(a_j;X_j).$

Further, since $K_{G_j}\searrow K_D$ pointwise, it follows that
$I_a K_{G_j}\subset I_a K_{G_{j+1}}$ and $\cup_j I_a K_{G_j}=I_a
K_D.$ Then
$$\E(I_a K_{G_j})\subset\E(I_a K_{G_{j+1}})\mbox{ and
}\cup_j\E(I_a K_{G_j})=\E(I_a K_G).$$ Hence $\wdt
K_{D_j}(a_j;X_j)\le \wdt K_{G_j}(a;X)\searrow\wdt K_D(a;X)$
pointwise.\qed
\smallskip

\noindent{\bf Remark.} The above proof shows that Proposition
\ref{list}, (i) and (ii) remain true for complex manifolds.

To see (iii), note that it is known to hold with $K$ instead of $\wdt K$
(see the proof of
\cite[Proposition 3]{Roy}.

Moreover, any balanced domain can be exhausted by bounded balanced
domains with continuous Min\-kow\-ski functions  (see \cite[Lemma
4]{Nik-Pfl1}). Let $(E_k)$ be such an exhaustion of $I_a K_D.$
Then, by continuity of $h_{E_k}$,  for any $k$ there is a $j_k$ such that
$E_k\subset I_{a_j} K_{D_j}$ for any $j>j_k$. Hence, if we denote by
$h_k$  the Minkowski function of $\E(E_k)$, which is
upper semi-continuous,
$$\limsup_{j\to\infty}\wdt
K_{D_j}(a_j;X_j)\le\limsup_{j\to\infty}h_k(X_j)\le h_k(X).$$
  It remains to use
that $h_k(X)\searrow\wdt K_D(a;X).$

 Another way to see (iii) for
manifolds is to use the case of domains and the standard approach
in \cite[p.~2]{Nik-Pfl2} (embedding in $\C^N$).

\begin{proposition}\label{cont} Let $D\Subset\C^n$ be a pseudoconvex
domain with $\mathcal C^1$-smooth boundary. Let $(D_j)$ be a sequence of
bounded domains in $\C^n$ with $D\subset D_{j+1}\subset D_j$ and
$\cap_j D_j\subset\overline D.$ If $D_j\times\C^n\ni(z_j,X_j)\to
(z,X)\in D\times\C^n,$ then $\wdt K_{D_j}(z_j;X_j)\to\wdt K_D(z;X).$ In particular,
$\wdt K_D$ is a continuous function.
\end{proposition}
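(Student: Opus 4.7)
The proof naturally splits into the upper and lower semicontinuity halves of the convergence $\wdt K_{D_j}(z_j;X_j)\to\wdt K_D(z;X)$.

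\textbf{Upper bound.} The inequality $\limsup_j\wdt K_{D_j}(z_j;X_j)\le\wdt K_D(z;X)$ is immediate from the tools already in place. Since $D$ is open and $z\in D$, we have $z_j\in D$ for $j$ large; for such $j$, the inclusion $D\hookrightarrow D_j$ is holomorphic (as $D\subset D_j$), so Proposition \ref{list}(i) yields $\wdt K_{D_j}(z_j;X_j)\le\wdt K_D(z_j;X_j)$. Upper semicontinuity of $\wdt K_D$ itself (apply Proposition \ref{list}(iii) to the constant exhaustion $D_j\equiv D$) then provides $\limsup_j\wdt K_D(z_j;X_j)\le\wdt K_D(z;X)$.

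\textbf{Lower bound, via $K_D$.} For the reverse inequality I first prove the classical convergence $K_{D_j}(z_j;X)\to K_D(z;X)$. Since $D$ is pseudoconvex with $\mathcal C^1$-smooth boundary, a standard construction using the signed distance function produces a bounded continuous strictly plurisubharmonic exhaustion, so $D$ is hyperconvex and hence taut. The $\le$ side is as in the upper bound. For the $\ge$ side, choose near-extremal disks $\phi_j\in\OO(\D,D_j)$ with $\phi_j(0)=z_j$ and $\phi_j'(0)=\lambda_j X_j$ where $\lambda_j K_{D_j}(z_j;X_j)\to 1$; all these disks lie in the fixed bounded $D_1$, so Montel produces a subsequential limit $\phi:\D\to\overline D$, and tautness of $D$ combined with $\phi(0)=z\in D$ confines $\phi$ to $D$, giving $K_D(z;X)\le 1/\lim\lambda_j$. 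For fixed $z\in D$ the nested inclusions $D_{j+1}\subset D_j$ make $K_{D_j}(z;\cdot)$ monotone increasing, bounded above by the continuous function $K_D(z;\cdot)$ (continuous because $D$ is taut); Dini's theorem in its form for monotone increasing sequences of upper semicontinuous functions converging to a continuous function on a compact set then upgrades pointwise convergence to uniform convergence on compacts of $\C^n$ (using log-homogeneity to reduce to the unit sphere).

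\textbf{Passage from $K$ to $\wdt K$.} Set $h_j(X):=K_{D_j}(z;X)$, $h(X):=K_D(z;X)$, and let $\hat h_j,\hat h$ denote the Minkowski functions of the envelopes of holomorphy of the corresponding balanced indicatrices, so that $\wdt K_{D_j}(z;\cdot)=\hat h_j$ and $\wdt K_D(z;\cdot)=\hat h$. Since $\hat h_j$ is a psh minorant of $h_j\le h$, we get $\hat h_j\le\hat h$. Conversely, for any $\eps>0$ and $j$ large, uniformity gives $h_j(X)\ge h(X)-\eps|X|$ on $\C^n$ (extending from the unit sphere by homogeneity); then $\hat h-\eps|X|$ is plurisubharmonic (as the Euclidean norm is psh) and majorized by $h_j$, hence $\hat h-\eps|X|\le\hat h_j$. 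Letting $j\to\infty$ and then $\eps\to0$ gives $\hat h_j\nearrow\hat h$ uniformly on compacts, i.e. $\wdt K_{D_j}(z;\cdot)\to\wdt K_D(z;\cdot)$ uniformly on compacts; in particular $\wdt K_D(z;\cdot)$ is continuous as a uniform limit of u.s.c.\ functions. To handle $z_j\to z$, I invoke joint continuity of $K_D$ on $D\times\C^n$ (again from tautness) and a routine localization/diagonal argument, obtaining $\wdt K_{D_j}(z_j;X_j)\to\wdt K_D(z;X)$.

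\textbf{Main obstacle.} The central difficulty is the interchange in the previous paragraph: plurisubharmonic regularization does not commute with general pointwise limits of Minkowski-type functions, so one must upgrade to uniform convergence on compacts, which in turn forces both hypotheses on $D$ to be used essentially (pseudoconvexity plus $\mathcal C^1$-smooth boundary $\Rightarrow$ hyperconvex $\Rightarrow$ taut $\Rightarrow$ $K_D$ continuous $\Rightarrow$ the Dini step applies). The adaptation from fixed base point to a varying sequence $z_j\to z$ is technical but not conceptually different.
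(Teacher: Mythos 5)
Your overall strategy coincides with the paper's: the upper bound is obtained exactly as you do it (monotonicity plus Proposition \ref{list}(iii)), and for the lower bound the paper likewise reduces to the classical convergence $K_{G_j}\nearrow K_D$ for a decreasing sequence shrinking to the taut domain $D$ (citing \cite[Proposition 3.3.5(b)]{Jar-Pfl1} rather than reproving it) and then transfers the information to the envelopes of the indicatrices. One structural difference: the paper handles the moving base point $z_j\to z$ concretely, by the affine rescalings $\Phi_j$ from the proof of Proposition \ref{list}(iii), which replace $\wdt K_{D_j}(z_j;X_j)$ by $\wdt K_{G_j}(z;X)$ at a \emph{fixed} point $z$; your appeal to ``joint continuity of $K_D$ plus a routine localization/diagonal argument'' does not by itself control $\wdt K_{D_j}$ at the varying points $z_j$, so you should make this reduction explicit (the tool is already in the paper).

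There is one genuinely wrong step in your transfer from $K$ to $\wdt K$. You claim that $\hat h-\eps|X|$ is plurisubharmonic ``as the Euclidean norm is psh''; but a difference of psh functions is not psh in general, and, more importantly, the extremal property characterizing $\hat h_j$ (the Minkowski function of the envelope of holomorphy of the balanced domain $\{h_j<1\}$) is maximality among \emph{logarithmically} plurisubharmonic, absolutely homogeneous minorants of $h_j$ --- not among psh minorants. So the inequality $\hat h-\eps|X|\le\hat h_j$ is unjustified as written. The repair is multiplicative rather than additive: since $D$ is bounded, $h=K_D(z;\cdot)\ge c_0|\cdot|$ for some $c_0>0$, so the uniform estimate $h_j\ge h-\eps|\cdot|$ gives $h_j\ge(1-\eps/c_0)h\ge(1-\eps/c_0)\hat h$, and $(1-\eps/c_0)\hat h$ \emph{is} log-psh and homogeneous, whence $\hat h_j\ge(1-\eps/c_0)\hat h$; letting $j\to\infty$ and $\eps\to0$ completes the argument. (This is in effect what the paper's dilation by $c>1$ in ``$\cap_j I_zK_{G_j}\subset cI_zK_D$, hence $\cap_j\E(I_zK_{G_j})\subset c\E(I_zK_D)$'' accomplishes.) A final small point: a uniform limit of u.s.c.\ functions is only u.s.c.; the continuity of $\wdt K_D$ follows instead from combining the two semicontinuity inequalities you prove.
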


\noindent{\bf Remark.} It is well-known that any bounded pseudoconvex domain with
$\mathcal C^1$-smooth boundary is taut (i.e.~$\OO(\D,D)$ is a normal family).
It is unclear whether only the tautness of $D$ implies the continuity of
$\wdt K_D$ ($K_D$ has this property).
\smallskip

\noindent{\it Proof.} In virtue of Proposition \ref{list} (iii), we have only to show that
$$\liminf_{j\to\infty}\wdt K_{D_j}(z_j;X_j)\ge\wdt K_D(z;X).$$ Using the approach
in the proof of Proposition \ref{list} (iii), we may find another
sequence $(G_j)$ of domains with the same properties as $(D_j)$
such that  $\wdt K_{D_j}(z_j;X_j)\ge\wdt K_{G_j}(z;X).$ It follows
from the proof of \cite[Proposition 3.3.5 (b)]{Jar-Pfl1} that
$K_{G_j}\nearrow K_D$ pointwise and then $\cap_j I_z
K_{G_j}\subset cI_z K_D$ for any $c>1.$ Hence $\cap_j\E(I_z
K_{G_j})\subset c\E(I_z K_D)$ which completes the proof.$\qed$

\end{document}